\theoremstyle{plain}
\newtheorem{theorem}{Theorem}
\newtheorem{definition}[theorem]{Definition}
\newtheorem{lemma}[theorem]{Lemma}
\newtheorem{proposition}[theorem]{Proposition}
\newtheorem{corollary}[theorem]{Corollary}
\newtheorem{remark}[theorem]{Remark}
\newcommand\es{\varnothing}
\newcommand\ol{\overline}
\newcommand\Aut{\mathrm{Aut}}
\newcommand\sQ{{\mathcal Q}}
\newcommand\sH{{\mathcal H}}
\newcommand\sF{{\mathcal F}}
\newcommand\RR{{\mathbb R}}
\newcommand\ZZ{{\mathbb Z}}
\newcommand\PP{{\mathbb P}}
\newcommand\om{\omega}
\newcommand\g{\gamma}
\newcommand\si{\sigma}
\newcommand\eps{\epsilon}
\newcommand\De{\Delta}
\newcommand\qq{\qquad}
\newcommand\q{\quad}
\newcommand\resp{respectively}
\newcommand\oo{\infty}
\newcommand\sT{{\mathcal T}}
\newcommand\sL{{\mathcal L}}
\newcommand\Ga{\Gamma}
\newcommand\Om{\Omega}
\newcommand\La{\Lambda}
\newcommand\de{\delta}
\newcommand\lra{\leftrightarrow}
\newcommand\pc{p_{\text{\rm c}}}
\newcommand\pcs{\pc^{\text{\rm site}}}
\newcommand\pu{p_{\text{\rm u}}}
\newcommand\pus{\pu^{\text{\rm site}}}
\newcommand\pcb{\pc^{\text{\rm bond}}}
\newcommand\pub{\pu^{\text{\rm bond}}}
\newcommand\Md{\Phi}
\newcommand\Gd{\what G}
\newcommand\dw{d_{\Gd}}
\newcommand\dgs{d_{G_*}}
\newcommand\sias{\si_A^*}
\newcommand\wsias{\what\si_A}
\newcommand\wsims{\what\si_M}
\renewcommand\ell{l}
\newcommand\pd{\partial}
\newcommand\sm{\setminus}
\newcommand\piv{\mathrm{Pi}}
\renewcommand\div{\mathrm{Di}}
\newcommand\Pif{\what\Pi}
\newcounter{mycount}\newcounter{mycount2}\newcounter{mycount3}
\newenvironment{romlist}{\begin{list}{\rm(\roman{mycount2})}%
   {\usecounter{mycount2}\labelwidth=1cm\itemsep 0pt}}{\end{list}}
\newenvironment{letlist}{\begin{list}{\rm(\alph{mycount})}%
   {\usecounter{mycount}\labelwidth=1cm\itemsep 0pt}}{\end{list}}
\newenvironment{Alist}{\begin{list}{\MakeUppercase{\rm\alph{mycount}}.}%
   {\usecounter{mycount}\labelwidth=1cm\itemsep 0pt}}{\end{list}}
\newcounter{newcount1}
\newcommand\ga{\gamma}
\newcommand\diam{{\text{\rm diam}}}
\newcommand\what{\widehat}
\newcommand\PPps{\PP_{p,s}}
\newcommand\nst{non-self-touching}
\newcommand\nt{non-touching}
\numberwithin{equation}{section}
\numberwithin{theorem}{section}
\numberwithin{figure}{section}
\newcommand\dinst{$2\oo$-nst path}
\DeclareMathOperator*{\argmax}{\arg\!\max}
\title[Percolation critical  probabilities]{Percolation critical probabilities\\ of matching lattice-pairs}
\author{Geoffrey R.\ Grimmett}
\address{(GRG) Centre for
Mathematical Sciences, Cambridge University, Wilberforce Road,
Cambridge CB3 0WB, UK} 
\email{g.r.grimmett@statslab.cam.ac.uk}
\urladdr{\url{http://www.statslab.cam.ac.uk/~grg/}}
\address{(ZL) Department of Mathematics,
University of Connecticut, Storrs, Connecticut 06269-3009, USA} \email{zhongyang.li@uconn.edu}
\urladdr{\url{http://www.math.uconn.edu/~zhongyang/}}
\author{Zhongyang Li}
\date{2 March 2022, revised 17 February 2024}
\keywords{Percolation, site percolation, critical probability, hyperbolic plane, matching graph}
\subjclass[2010]{60K35, 82B43}
\begin{document}

\begin{abstract}
A necessary and sufficient condition is established for the strict inequality $\pc(G_*)<\pc(G)$
between the critical probabilities of site percolation on a one-ended,
quasi-transitive, plane graph $G$ and 
on its matching graph $G_*$.
When $G$ is transitive, strict inequality holds if and only if $G$ is not a triangulation.

The basic approach is the standard method of enhancements, but its implemention has complexity
arising from  the non-Euclidean (hyperbolic) space,
 the study of site (rather than bond) percolation, and the generality
of the assumption of quasi-transitivity. 

This result is complementary to the work of the authors 
(\lq\lq Hyperbolic site percolation", {\tt arXiv:2203.00981}) on the equality $\pu(G) + \pc(G_*) = 1$,
where $\pu$ is the critical probability for the existence of a unique infinite open cluster.
It implies for transitive, one-ended
 $G$ that  $\pu(G) + \pc(G) \ge 1$, with equality if and only if $G$ is a triangulation.
\end{abstract}
\maketitle

\section{Strict inequalities for percolation probabilities}\label{eh}

It is fundamental to the percolation model on a graph $G$ that there
exists a \lq critical probability' $\pc(G)$ marking the onset of infinite open clusters. 
Two questions arise immediately.
\begin{letlist}
\item  What can be said about the value of $\pc(G)$?
\item For what values of the percolation density $p$ is there a \emph{unique} infinite cluster? 
\end{letlist}
These questions have attracted a great deal of attention since percolation was introduced by
Broadbent and Hammersley \cite{BH57} in 1957. They turn out to be more tractable when $G$ is planar.

Amongst exact calculations of $\pc(G)$, those for bond percolation on the square, triangular,
and hexagonal lattices have been especially influential (see \cite{K81, Wier81}, and also the book \cite{GP}).
Earlier discussion (falling short of rigorous proof)  of these values was provided by Sykes and Essam \cite{SE64} in 1964. 
The last paper includes also an account of site percolation on the triangular lattice, 
and a discussion of site percolation on a so-called \lq matching pair' of planar lattices. 
This term is explained in the companion paper \cite{GL21a}; 
the current work is concerned with the matching pair $(G,G_*)$,
where the so-called matching graph 
$G_*$ is defined as follows.

Let $G=(V,E)$ be a planar graph, 
embedded in the plane $\RR^2$ in such way that two edges may intersect only at their endpoints.
A \emph{face} of $G$ is a connected component of $\RR^2\sm E$. The boundary of a bounded face $F$ is comprised 
of edges of $G$. The \emph{matching graph} of $G$, denoted $G_*$, is obtained from $G$ by adding all diagonals
to all faces. See Figure \ref{fig:mg}.
Evidently, $G_*=G$ when $G$ is a triangulation. 
A graph with connectivity $1$ or $2$ may have a multiplicity of 
non-homeomorphic planar embeddings, and therefore
there is potential ambiguity over the definition of its matching and dual graphs (see Theorem \ref{p21}(c)).

\begin{remark}\label{inf-face}
A face $F$ of the above graph $G$ may be unbounded, in which case its boundary comprises infinitely many edges and vertices. Such $F$ generates an infinite complete subgraph of $G_*$,
on which a percolation process is trivial. We shall usually assume that all faces are bounded.
Since our graphs are assumed quasi-transitive, this is equivalent to assuming that 
$G$ is one-ended. (See \cite{Ho}, \cite[Prop.\ 2.1]{Bab97}.)
For quasi-transitive graphs with two or infinitely many ends, see Remark \ref{inf-face2}.   
\end{remark}

\begin{figure}
\centerline{\includegraphics[width=0.5\textwidth]{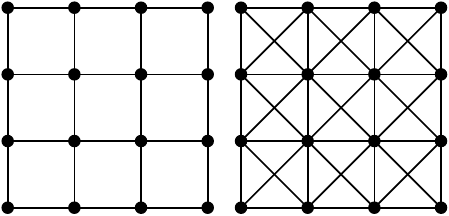}}
\caption{The square lattice $\ZZ^2$ and its matching graph.}\label{fig:mg}
\end{figure}

Sykes and Essam presented motivation for the exact relationship 
\begin{equation}\label{eq:exact}
\pcs(G)+\pcs(G_*)=1,
\end{equation}
and this has been verified in a number of cases when $G$ is amenable
(see \cite{vdB81,K81}).
Note  that,
since $G$ is a subgraph of $G_*$, it is trivial that
\begin{equation}\label{eq:exact2}
\pcs(G_*)\le \pcs(G).
\end{equation}
It is less trivial to prove strict inequality in \eqref{eq:exact2} for non-triangulations, and indeed this
sometimes fails to hold. 

Suppose that $G$ is planar, quasi-transitive, one-ended, and possibly non-amenable. 
If we are to embed $G$ in a plane in an appropriate fashion,
the plane in question may need to be hyperbolic rather than Euclidean. 
Site percolation in the hyperbolic plane
is the subject of the recent paper \cite{GL21a}, where
it is proved, amongst other things, that  
\begin{equation}\label{eq:exact3}
\pus(G) + \pcs(G_*) = 1,
\end{equation}
where $\pus$ is the critical probability for the existence of a \emph{unique} infinite open cluster.
When $G$ is amenable, we have $\pcs(G)=\pus(G)$, in agreement with
\eqref{eq:exact} 
(see \cite[Chap.\ 7]{LP} for a discussion of critical points of 
quasi-transitive, amenable graphs). 
By \eqref{eq:exact2}, we have   $\pus(G) + \pcs(G) \ge 1$, and it becomes
desirable to know when strict inequality holds. 
(When $G$ is non-amenable, it is proved in \cite{bs00} that $\pcs(G)<\pus(G)$.)  

Let $\sT$ (\resp, $\sQ$) be the set of all infinite, connected, locally finite, plane, $2$-connected,
simple graphs that are in addition
transitive (\resp, quasi-transitive). (It is explained in \cite[Rem.\ 3.4]{GL21a} that the
assumption of $2$-connectedness is innocent in the context of site percolation.)
A path $(\dots,x_{-1},x_0,x_1,\dots)$ of $G_*$ is called \emph{\nst} if, for all $i$, $j$,
two vertices $x_i$ and $x_j$ are adjacent if and only if $|i-j|=1$.  Here is the main theorem of the current work,
followed by a corollary.

\begin{theorem}\label{maintheorem}
Let $G\in\sQ$ be one-ended. Then $\pcs(G_*)<\pcs(G)$ if
and only if $G_*$ contains some doubly-infinite, \nst\ path that includes some diagonal of $G$.
\end{theorem}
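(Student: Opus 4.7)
The plan is to treat the two directions separately. The reverse direction ($\Leftarrow$) is the essential-enhancement argument and carries the technical weight; the forward direction ($\Rightarrow$) is a structural extraction argument from an infinite cluster.

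For the forward direction I would proceed by contrapositive: assume every doubly-infinite \nst\ path of $G_*$ is a path of $G$, and show $\pcs(G_*)\ge\pcs(G)$. Combined with \eqref{eq:exact2} this gives the desired equality. Fix $p>\pcs(G_*)$, so that $G_*$ has an infinite open cluster $C$ almost surely. Using $2$-connectedness, local finiteness, and one-endedness of $G$, I would extract within $C$ a doubly-infinite \nst\ path of $G_*$ (the one-ended assumption is what makes a two-sided extraction possible rather than merely a one-sided ray). By hypothesis such a path contains no diagonal of $G$, hence is an open path of $G$; so $G$ has an infinite open cluster at density $p$, giving $\pcs(G)\le p$, and letting $p\downarrow\pcs(G_*)$ concludes.

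For the reverse direction, assume $\pi$ is a doubly-infinite \nst\ path of $G_*$ that includes a diagonal $e$ of some face $F$ of $G$. The orbit $\Aut(G)\cdot F$ under quasi-transitivity provides a family of local pictures, one around each translate of $F$, with only finitely many combinatorial types. I introduce a two-parameter measure $\PP_{p,\eps}$ in which vertices are independently open with probability $p$ and each translate of $F$ is independently \emph{activated} with probability $\eps$; open connections may traverse diagonals only across activated faces. At $\eps=0$ this is site percolation on $G$, and at $\eps=1$ it dominates site percolation on $G_*$. Let $\theta(p,\eps)=\PP_{p,\eps}(\o\leftrightarrow\oo)$. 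A site-percolation Russo formula, combined with a local pivotal-configuration constructed from a bounded-window modification of $\pi$ around each orbit representative of $F$, gives a differential inequality of the form $\partial_\eps\theta \ge c(1-\eps)\,\partial_p\theta$ in a neighbourhood of the critical curve. Integrating along a contour that pushes $\eps$ from $0$ to $1$ while lowering $p$ then shifts the critical probability strictly, yielding $\pcs(G_*)<\pcs(G)$.

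The main obstacle is the essential-enhancement step: showing that for each activated translate of $F$ there is a positive-probability local configuration in which activating $F$ is pivotal for $\o\leftrightarrow\oo$. The non-self-touching hypothesis on $\pi$ is the precise combinatorial input needed here, as it prevents unintended chords so that the \textquotedblleft diagonal open\textquotedblright\ and \textquotedblleft diagonal closed\textquotedblright\ scenarios genuinely differ at the level of infinite clusters, rather than being reconnected by some other edge of $G_*$ local to $F$. Complications that need to be handled carefully are: (i) site rather than bond pivotality, since opening a single site can merge several clusters at once and the pivotal calculus is correspondingly more delicate; (ii) quasi-transitivity rather than transitivity, requiring one to partition activations by orbit type and secure a uniform lower bound on each contribution; and (iii) the hyperbolic, possibly non-amenable, geometry, which removes the planar-duality shortcuts that simplify the classical Euclidean enhancement and forces the pivotal construction to be made purely combinatorially within a bounded window of $\pi$.
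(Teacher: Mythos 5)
Your reverse direction is, in outline, the paper's own argument: the paper interpolates between $G$ and $G_*$ via the graph $\Gd$ obtained by adding facial sites that carry an independent density $s$ (your ``activated faces''), proves a pivotality-comparison lemma (Lemma \ref{lem:1}) by locally rebuilding the configuration around a pivotal vertex so that a facial site becomes pivotal, and integrates the resulting differential inequality \eqref{39}. You have correctly located where the work lies, but you have only named the hard step rather than carried it out; the paper's Stages I--V, with the case analysis on $D$ and the hyperbolic-tube crossings supplied by Lemma \ref{prop:10}, are the content of that step. You also gloss over a necessary preliminary: the doubly-infinite path $\pi$ must first be converted into a \emph{finite, local} pattern (property $\what\Pi_A$) that can be implanted near an arbitrary pivotal vertex and then reconnected to the ambient cluster; this conversion is Theorem \ref{thm:13} and is itself a substantive surgery argument.

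The genuine gap is in your forward direction. You assert that one-endedness of $G$ lets you extract a \emph{doubly-infinite} \nst\ path inside an infinite open cluster $C$ of $G_*$. One-endedness is a property of the deterministic graph $G$, not of the random subgraph $C$: an infinite, connected, locally finite graph need not contain any doubly-infinite path (a single ray is a counterexample), and nothing in your argument rules this out for $C$ or controls where the diagonals of an extracted path sit. What one gets cheaply from $C$ by oxbow-removal (Lemma \ref{lem:cred}(b)) is a singly-infinite \nst\ ray, which may a priori contain infinitely many diagonals, so the hypothesis ``no doubly-infinite \nst\ path contains a diagonal'' cannot be applied to it directly. To conclude that the tail of such a ray is a path of $G$, one needs to know that the absence of a doubly-infinite \nst\ path with a diagonal forbids diagonals occurring arbitrarily deep inside long finite \nst\ paths; this is exactly the local--global equivalence $\Pi\Leftrightarrow\Pi_A$ of Theorem \ref{thm:13}, proved by extending a finite \nst\ path with a central diagonal to a doubly-infinite one using the cycles $\si_A^*$ and hyperbolic tubes. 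Your proposal contains no substitute for this step, so the forward direction does not close as written.
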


Theorem \ref{maintheorem} is proved in Section \ref{sec:6} using
methods derived in Section \ref{sec:5}.

\begin{corollary}\label{cor}
Let $G\in\sQ$ be one-ended. Then
$\pus(G)+\pcs(G) \ge 1$, with strict inequality if and only if the condition of Theorem \ref{maintheorem} holds.
\end{corollary}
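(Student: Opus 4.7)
The plan is to deduce the corollary directly from Theorem \ref{maintheorem} together with the identity \eqref{eq:exact3} taken from the companion work \cite{GL21a}. Rewriting \eqref{eq:exact3} as $\pus(G)=1-\pcs(G_*)$ and adding $\pcs(G)$ to both sides, I obtain
\begin{equation*}
\pus(G)+\pcs(G) = 1 + \bigl[\pcs(G)-\pcs(G_*)\bigr].
\end{equation*}
By the trivial containment $G\subseteq G_*$ and the resulting inequality \eqref{eq:exact2}, the bracketed quantity is non-negative, which yields $\pus(G)+\pcs(G)\ge 1$.

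For the second assertion, the same display shows that strict inequality $\pus(G)+\pcs(G)>1$ is equivalent to the strict inequality $\pcs(G_*)<\pcs(G)$. An application of Theorem \ref{maintheorem} then identifies this with the stated condition that $G_*$ contains some doubly-infinite, \nst\ path which uses at least one diagonal of $G$. Since both \eqref{eq:exact3} and Theorem \ref{maintheorem} are available, no further argument is needed; the corollary is a direct combination of these two ingredients, so the only \emph{real} work has already been done in proving Theorem \ref{maintheorem}.
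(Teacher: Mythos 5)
Your proposal is correct and is essentially the paper's own argument: the weak inequality is exactly the combination of the identity \eqref{eq:exact3} from \cite{GL21a} with the trivial bound \eqref{eq:exact2} (this combination is what the paper cites as \cite[Thm 1.1(b)]{GL21a}), and the characterisation of strict inequality is obtained in both cases by using \eqref{eq:exact3} to reduce it to $\pcs(G_*)<\pcs(G)$ and invoking Theorem \ref{maintheorem}.
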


\begin{proof}[Proof of Corollary \ref{cor}]
The given (weak) inequality is proved at \cite[Thm 1.1(b)]{GL21a},
and the strict inequality holds by \eqref{eq:exact3} and Theorem \ref{maintheorem}.
\end{proof}

We turn to examples of Theorem \ref{maintheorem} in action. Firstly, 
the condition of the theorem is satisfied by all transitive, one-ended non-triangulations
$G\in\sT$, as in the next theorem.

\begin{theorem}\label{thm:trans0}
Let $G\in\sT$ be one-ended but not a triangulation. Then $G$
satisfes the condition of Theorem \ref{maintheorem},  and therefore $\pc(G_*)<\pc(G)$.
\end{theorem}

This is essentially the assertion of the forthcoming Theorem \ref{thm:trans},
which is proved
in Section \ref{ssec:proof72}  by the so-called metric method.
The inequality of Theorem \ref{thm:trans0} then holds by Theorem \ref{maintheorem}.

The situation for quasi-transitive graphs $G$ is  more complicated, 
and we have no useful necessary and
sufficient condition for the inequality $\pc(G_*)<\pc(G)$. 
Instead, we include in Section \ref{sec:examples}
a sufficient (but not necessary) condition. 
(\emph{Note added before publication}: the quasi-transitive case is treated in \cite{G24}.)

\begin{remark}\label{inf-face2}
The above results are subject to the assumption that $G$ is \emph{one-ended}.
By \cite{Ho} and \cite[Prop.\ 2.1]{Bab97}, the number $\eta$ of ends 
of $G\in\sQ$ lies in the set $\{1,2,\oo\}$. As in
Remark \ref{inf-face}, we have that $\pc(G_*)=0$ if $\eta\ne 1$. On the other hand,
it is standard that
$\pc(G)\ge 1/(\De-1)$ where $\De$ is the maximum vertex-degree of $G$.
The inequality $\pc(G_*)<\pc(G)$ is thus trivial when $\eta\ne 1$. 
\end{remark}

There follow some remarks about the proof of Theorem \ref{maintheorem}. 
The general approach of the proof is to use
the method of enhancements, as introduced and developed in \cite{AG} (though there is earlier work 
of relevance, including \cite{Men87}). 
While this approach is fairly standard, and the above result natural, the proof turns out 
to have substantial complexity arising from the generality of the assumptions on $G$,
and the fact that we are studying site (rather than bond) percolation (see \cite{BBR}); the proof is, in contrast,
fairly immediate for the amenable, planar lattices mentioned above.

We remark that the version of \eqref{eq:exact3} for bond percolation, namely
\begin{equation}\label{eqLbndexact3}
\pub (G)+\pcb (G^+)=1,
\end{equation}
was proved by Benjamini and Schramm \cite[Thm 3.8]{bs00} for one-ended, non-amenable, plane, transitive graphs.
Here, $G^+$ denotes the dual graph of $G$. (The amenable case is standard.) 
The basic difference between the bond and site 
problems is the following.
In the study of bond percolation, one is interested in open \emph{self-avoiding}
paths, whereas for site percolation we study open, \emph{\nst} paths ---
given an infinite path $(\dots,x_{-1},x_0,x_1,\dots)$ such that, for some 
$i+1<j$, $x_i$ 
and $x_j$ are adjacent, 
the states of vertices $x_{i+1},\dots,x_{j-1}$ are independent of the event 
that the path contains  an infinite,
open sub-path. That is, one can cut out the loop.

The central idea of the proof of Theorem \ref{maintheorem} is as follows.
Suppose $G$ satisfies the given assumptions, and write $\pi$ for the
given doubly-infinite path containing the diagonal $d$. In order to apply the enhancement method, 
one needs to show that, if $z$ is a pivotal vertex
for the existence of a long (but finite) open path of $G_*$ between given regions $A$, $B$ of space,
after making local changes to the configuration one may find a pivotal diagonal near $z$.
This is achieved by a surgery of paths. First, one cuts a finite subpath $\pi'$ from $\pi$ containing the diagonal $d$.
Then one inserts a translate of $\pi'$ into an open path $\nu$ from $A$ to $B$ in which $z$ is pivotal.
Such insertion requires \lq adjustments' near the interfaces of these two paths, 
and it must be achieved without sacrifice of the non-self-touching property.
It is an impediment to this surgery that $G_*$ is non-planar (unless $G$ is a triangulation), and thus one works
instead with a graph, denoted $\what G$, that is obtained from $G$ by placing a new vertex within 
each non-triangular face of $G$ and joining this new vertex to each vertex of the face.  

Turning to the contents of the current article, after the introductory Section \ref{sec:not}, we explain
the relevance of Theorem \ref{maintheorem} to transitive and quasi-transitive graphs in Section \ref{sec:examples}.  
The proofs begin with some preliminary observations in Section \ref{sec:5}, and the main theorem is proved
in Section \ref{sec:6}.  The claim of Section \ref{sec:examples} for quasi-transitive graphs 
is proved  in Section \ref{sec:emb2}.

\section{Notation and basic properties}\label{sec:not}

\subsection{Graph embeddings}\label{ssec:1}

We shall assume familiarity with basic graph theory and its notation, and refer the reader to \cite{GL21a} for relevant definitions. Let $\sQ$ be given as prior to Theorem \ref{maintheorem}, 
and let $\sT$ be the subset of $\sQ$ comprising the transitive graphs.

An \emph{embedding} of a graph $G=(V,E)$ (with underling $1$-complex denoted $\vert G\vert$)
in a surface $S$ is a continuous map $\phi: \vert G\vert \to S$ such that the induced map $\vert G\vert
\to \phi(\vert G\vert)$ is a homeomorphism. An embedding $\phi$ is called \emph{cellular} if 
$S\setminus \phi(G)$ is a disjoint union of spaces 
homeomorphic to an open disc. (See \cite{Moh} and \cite[Sect.\ 3.2]{MT}.)

We are concerned here with embeddings of planar graphs in either the Euclidean or hyperbolic planes, 
and \emph{we shall use $\sH$ to
denote either of these as appropriate for the setting}.
A useful summary of hyperbolic geometry may be found in
\cite{CFKP} (see also \cite{Iver}).  
An embedding of a  graph $G$  in $\sH$ is called \emph{proper} if every compact subset of $\sH$ 
contains only finitely many vertices of $G$ and intersects only finitely many edges. 
Henceforth, all embeddings will be assumed to be proper. 

An \emph{Archimedean tiling} (or \emph{uniform tiling})
of a two-dimensional Riemannian manifold 
is a tiling by regular polygons such that its isometry group (of the tiling)
acts transitively on its vertex-set.
The edges of the tiling are geodesics. 
A discussion of amenability may be found in \cite[Sect.\ 6.1]{LP}.

Some known facts concerning embeddings follow. 

\begin{theorem}\label{p21}\mbox{\hfil}
\begin{letlist}
\item \cite[Thms 3.1,  4.2]{Bab97}
If $G\in\sT$ is one-ended, then $G$ may be embedded in $\sH$
as an Archimedean tiling, and all automorphisms of $G$ extend to isometries of $\sH$.
If $G\in\sQ$ is one-ended and  $3$-connected, then $G$ may be embedded in $\sH$
such that all automorphisms of $G$ extend to isometries of $\sH$. 
Furthermore, the target space $\sH$ denotes the Euclidean plane if and only if $G$ is amenable.

\item \cite[p.\ 42]{Moh} 
Let $G$ be a $3$-connected graph, cellularly embedded in $\sH$ 
such that all faces are of finite size.
Then $G$ is uniquely embeddable in the sense that for any two cellular embeddings $\phi_1: G\to S_1$,
$\phi_2:G\to S_2$ into planar surfaces $S_1$, $S_2$, there is a homeomorphism $\tau:S_1\to S_2$ such that
$\phi_2=\tau\phi_1$.  

\item \cite[Thm 8.25 and Section 8.8]{LP} 
If $G=(V,E)\in\sQ$ is one-ended, there exists some embedding of $G$ in $\sH$
such that the edges coincide with geodesics, the dual 
graph $G^+$ is quasi-transitive, and all automorphisms of $G$ extend to isometries of $\sH$. 
Such an embedding is called \emph{canonical}.

\end{letlist}
\end{theorem}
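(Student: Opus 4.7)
The plan is to assemble the three parts from known results in the theory of planar graph embeddings and their rigidity, following the exposition in the companion paper \cite{GL21a}. No single ingredient is novel; the task is to identify the correct classical source for each part and to verify that the hypotheses of our setting (\emph{one-ended}, \emph{locally finite}, \emph{$2$-} or \emph{$3$-connected}, \emph{quasi-transitive}) align with those sources.

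For part (a), the starting point in the transitive case is a theorem of Babai asserting that any one-ended, locally finite, vertex-transitive planar graph admits a proper embedding in a simply-connected $2$-manifold of constant curvature such that $\Aut(G)$ acts by isometries. Since the manifold must be either the Euclidean or the hyperbolic plane (the spherical case is excluded by one-endedness), and since vertex-transitivity together with geodesic edges forces all faces to be congruent regular polygons with a common vertex figure, one arrives at an Archimedean tiling by the classification of uniform tilings in $\sH$. For the quasi-transitive, $3$-connected case, I would first use part (b) to pin down the combinatorial embedding, then invoke the extension of Babai's theorem due to Mohar (or the formulation given via accessibility of one-ended planar groups by Dunwoody and Thomassen) to promote $\Aut(G)$ to an isometric action.

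For part (b), the statement is the infinite analogue of Whitney's classical uniqueness theorem for $3$-connected finite planar graphs. The extension to one-ended, locally finite, $3$-connected, cellularly embedded graphs with finite faces is due to Imrich and Thomassen; the argument proceeds by exhausting $G$ with an increasing sequence of finite $3$-connected subgraphs, applying the finite Whitney theorem to each, and showing that the resulting local homeomorphisms patch together (up to a single global reflection) into the homeomorphism $\tau:S_1\to S_2$.

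For part (c), the canonical embedding with geodesic edges is obtained from the circle-packing theorem of Koebe--Andreev--Thurston, in its infinite quasi-transitive form established by He--Schramm: the vertices of $G$ are realized as centers of a disc packing in $\sH$, and the edges as the hyperbolic geodesics between centers of tangent discs. Quasi-transitivity of the packing transfers to quasi-transitivity of both $G^+$ and $\Aut(G)$ as a group of hyperbolic isometries. The main subtlety---and the step I would expect to require the most care---is consistency with part (b): one must verify that the geodesic embedding so produced agrees combinatorially with the cellular embedding in the rigidity statement, so that the dual graph $G^+$ is well-defined and inherits quasi-transitivity. This is handled by identifying the faces of the circle-packing embedding with the combinatorial faces of $G$ via Whitney's uniqueness, thereby closing the loop between (b) and (c).
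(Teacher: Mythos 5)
The paper offers no proof of Theorem \ref{p21}: it is stated as a summary of known embedding facts, with references deferred to \cite[Sect.\ 3.1]{GL21a}. Your proposal correctly identifies the standard sources behind each part (Babai's realization theorem and the classification of uniform tilings for (a), the infinite Whitney--Imrich uniqueness theorem for (b), and equivariant circle packing in the Koebe--Andreev--Thurston/He--Schramm tradition for (c)), so it is consistent with the route the paper intends and contains no substantive gap.
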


\begin{remark}\label{rem:emb}
\mbox{\hfil}
\begin{letlist}
\item 
All one-ended, transitive, planar graphs are $3$-connected,
and all embeddings of a one-ended, quasi-transitive, planar graph have only finite faces.
\item 
By Theorem \ref{p21}(b), any one-ended $G\in\sQ$ that is in addition transitive 
has a unique  cellular embedding in $\sH$ up to homeomorphism.
Hence, the matching and dual graphs of $G$ are independent of the embedding.
\item  
The conclusion of part (b) holds for any one-ended, $3$-connected $G\in \sQ$. 
\item For a one-ended, $2$-connected  $G\in\sQ$,  we fix
a canonical embedding (in the sense of Theorem \ref{p21}(c)). With this  given, 
the dual graph $G^+$ and the matching graph $G_*$ are quasi-transitive, 
and furthermore 
the boundary  of every face is a cycle of $G$.
\end{letlist}
\end{remark}

We give a formal definition of the matching graph of a planar graph $G=(V,E)$.
Firstly, one embeds $G$ in the plane in such a way that two edges intersect only at their endpoints; such an embedded
graph is called a \emph{plane graph}. 
A \emph{face} of a plane graph $G$ is a connected component of $\sH\sm E$. 
In this work we shall treat 
only one-ended graphs, for which all faces $G$ are bounded with (topological) boundaries $\partial F$ comprised of
finitely many edges; the \emph{size} of $F$ is the number of edges in its boundary. 
A \emph{cycle} $C$ of a simple graph $G=(V,E)$ is a sequence $v_0,v_1,\dots,  v_{n+1}=v_0$ of vertices $v_i$ such that $n \ge 3$, 
$e_i:=\langle v_i,v_{i+1}\rangle$ satisfies
$e_i\in E$ for $i=0,1,\dots,n$, and $v_0,v_1,\dots,v_n$ are distinct.
Let $G$ be a plane graph, duly embedded in the Euclidean or hyperbolic plane.
In this case we write $C^\circ$ for the bounded component of $\sH\sm C$,
and $\ol C = C\cup C^\circ$ for the closure of $C^{\circ}$.

Let $V(\pd F)$ be the set of vertices lying along 
the boundary of the face $F$. For each face $F$ and each non-adjacent pair $x,y\in V(\pd  F)$,
we add an edge inside $F$ between $x$ and $y$.
We write 
$G_*=(V,E_*)$ for the ensuing \emph{matching graph} of $G$. An edge $e\in E_*\sm E$ 
is called a  \emph{diagonal} of $G$ or of $G_*$, and it is denoted $\de(a,b)$ where $a$, $b$ are its endvertices.
If $\de(a,b)$ is a diagonal,  $a$ and $b$ are called \emph{$\ast$-neighbours}.

Note that $G_*$ depends on the particular embedding of $G$.
If $G$ is $3$-connected then, by Theorem \ref{p21}(b), it has a unique embedding up to homeomorphism.
If $G$ is $2$-connected but not $3$-connected, we need to be definite about the choice 
of embedding, and we require it henceforth to be
\lq canonical' in the sense of Theorem \ref{p21}(c).  

\subsection{Further notation}\label{ssec:2}
A plane graph $G$ is called a \emph{triangulation} it every face is bounded by a $3$-cycle.
The automorphism group of the graph $G=(V,E)$ is denoted $\Aut(G)$. 
The orbit of $v\in V$ is
written $\Aut(G)v$, and we let 
\begin{equation}\label{eq:defde}
\De=\min\bigl\{k: \text{for } v,w\in V,\text{ we have } d_G(\Aut(G)v,\Aut(G)w)\le k\bigr\},
\end{equation}
where 
$$
d_G(A,B)= \min\{d_G(a,b): a\in A,\, b\in B\}, \qq A,B\subseteq V,
$$
and $d_G$ denotes graph-distance in $G$.
We write $u\sim v$ if $u,v\in V$ are adjacent, which is to say that $d_G(u,v)=1$.
For any $G$, we fix some vertex denoted $v_0$.

We shall work with one-ended graphs $G\in\sQ$.
Since $G$ is assumed one-ended and $2$-connected, all its faces 
are bounded, with boundaries which are cycles of $G$ (see Remark  \ref{rem:emb}(d)).  

\begin{definition}\label{def:nst}
A path $\pi=(\dots,x_{-1},x_0,x_1\dots)$ of a graph $H$ is
called \emph{\nst} if $d_H(x_i,x_j)\ge2$ when $\vert j-i\vert\ge2$. 
A cycle $C=(v_0,v_1,\dots,v_n, v_{n+1}=v_0)$ of $H$
is called \nst\ if $d_H(x_i,x_j)\ge 2$ whenever $|i-j|\ge 2$ (with index-arithmetic 
modulo $n+1$). 
\end{definition}

Non-self-touching paths and cycles arise naturally when studying \emph{site} percolation (such paths were
called \emph{stiff} in  \cite{AG}, and \emph{self-repelling} in \cite[p.\ 66]{GP}).

We shall consider \nst\ paths in two graphs derived from a given $G\in\sQ$, namely its
matching graph $G_*$, and the graph $\Gd$ obtained by adding a site within each 
face $F$ of size $4$ or more, and connecting every vertex of $F$ to this new site. 
The graph $G_*$ may possess parallel edges.
The property of being \nst\ is indifferent to the existence of parallel edges, since
it is given in terms of the vertex-set of $\pi$ and the adjacency relation of $H$.

Here is the fundamental property of graphs that implies strict inequality of critical points. 
This turns out to be equivalent to a more 
technical \lq local' property, as described in Section \ref{ssec:4.2}; see Theorem \ref{thm:13}. 
As a shorthand, henceforth we abbreviate \lq doubly-infinite \nst\ path' to \lq\dinst'. 

\begin{definition}\label{def10}
The graph $G\in\sQ$ is said to have property $\Pi$ if $G_*$ contains some \dinst\ 
that includes some diagonal of $G$.
\end{definition}

For a graph $G=(V,E)$, let 
$$
\La_n(v)= \La_{G,n}(v) :=\{w \in V: d_G(v,w)\le n\}, \q
\pd \La_n(v) := \La_n(v) \setminus \La_{n-1}(v),
$$
and, furthermore, $\La_n=\La_{G,n} := \La_n(v_0)$.

\subsection{Percolation}\label{ssec:perc}

 Let $G=(V,E)$ be a connected, locally finite graph with bounded vertex-degrees. 
 A \emph{site percolation} configuration on $G$ is an 
 assignment $\omega\in \Om:=\{0,1\}^{V}$ to each vertex of either state $0$ or state $1$. 
 A vertex is called \emph{open} if it has state $1$, and \emph{closed} otherwise. 
 An \emph{open cluster} in $\omega$ is a maximal connected set of open vertices. 
 
Let $p\in [0,1]$. We endow $\Om$ with the product measure $\PP_p$ with density $p$.
For $v\in V$, let $\theta_v(p)$ be the probability that $v$ lies in an infinite open cluster.
It is standard that there exists $\pc(G)\in(0,1]$ such that
$$
\text{for } v\in V, \qq \theta_v(p) \begin{cases} = 0 &\text{if } p<\pc(G),\\
>0 &\text{if } p > \pc(G),
\end{cases}
$$ 
and $\pc(G)$ is called the \emph{critical probability} of $G$.

For background and notation concerning percolation theory, the reader is referred to the book \cite{GP}, the article
\cite{GL21a}, and to Section \ref{sec:6}. 

\section{Two criteria for property $\Pi$}\label{sec:examples}


In this section we present the \lq metric criterion' for a one-ended graph
$G\in\sQ$ to have the property $\Pi$ of Definition \ref{def10}.
This criterion is valid for one-ended,
non-triangulations $G\in\sT$, and thus we arrive in particular at the following.

\begin{theorem}\label{thm:trans}
Let $G\in\sT$ be one-ended but not a triangulation. Then $G$
has property $\Pi$.
\end{theorem}

The criterion holds for a certain class of quasi-transitive graphs,
and the outcome is a sufficient but not necessary condition for a 
quasi-transitive graph $G\in\sQ$ to have property $\Pi$, namely Theorems \ref{t72qq}. 


The embedding results of Section \ref{sec:not} may be used in proofs of the existence of
\dinst s in one-ended graphs $G\in\sQ$ satisfying the 
following forthcoming  metric criterion.
First, recall the relevant embedding property.
By Theorem \ref{p21}(a,\,c), every quasi-transitive, one-ended $G\in\sQ$ 
has a canonical embedding
in $\sH$.

Throughout  this section we shall work with the Poincar\'e disk model of hyperbolic geometry
(also denoted $\sH$), and we denote by $\rho$ the corresponding hyperbolic metric.
For definiteness, we consider only graphs $G$ embedded in the hyperbolic plane;
the Euclidean case is similar, subject to the simplification that the geometry of the space is Euclidean rather
than hyperbolic.

Let $G\in\sQ$ be one-ended and not a triangulation. By $2$-connectedness and Remark \ref{rem:emb}(d),
the faces of $G$ are bounded by cycles. 
As before, we restrict ourselves to the case when $G$ is non-amenable, and
we embed $G$ canonically in the Poincar\'e disk $\sH$. The edges of $G$ are hyperbolic geodesics, 
but its diagonals are not generally so. The  hyperbolic length of
an edge $e\in E_*\sm E$ does not generally equal the hyperbolic distance between its endvertices,
denoted $\rho(e)$.

For $e\in E_*$, let $\Gamma_e$ denote the doubly-infinite hyperbolic geodesic 
of $\sH$ passing though the endvertices of $e$,
and denote by $\pi(x) = \pi_e(x)$ the orthogonal projection of $x\in \sH$ onto $\Ga_e$.

\begin{definition}\label{def:max}
An edge $e\in E_*$ is called \emph{maximal} if 
\begin{equation}\label{eq:106}
\rho(e) \ge \rho(\pi_e(x),\pi_e(y)), \q\text{for all } f=\langle x,y\rangle\in E.
\end{equation}
The graph $G$ is said to satisfy the \emph{metric criterion} if $G$ has a canonical embedding in $\sH$ for which some diagonal $d\in E_*\sm E$ is maximal. 
\end{definition}

\begin{figure}
\centerline{\includegraphics[width=0.3\textwidth]{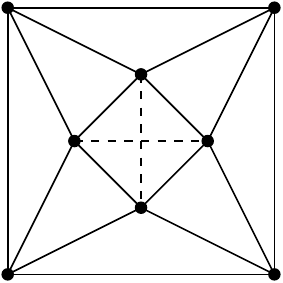}}
\caption{The graph $G$ is the tiling of the plane with copies of this square.
Taking into account the symmetries of the square, 
this tiling is canonical after a suitable rescaling of the interior square.
The diagonals are indicated by dashed lines.}\label{fig:notnec}
\end{figure}

There always exists some maximal edge of $E_*$, but it is not generally unique, and it may not be a diagonal. 
The following lemma is proved in the same manner as the forthcoming Lemma \ref{lem:22}.

\begin{lemma}\label{lem:75}
Let $e\in\argmax\{\rho(f): f\in E_*\}$. The edge $e$ is maximal.
\end{lemma}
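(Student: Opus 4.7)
The plan is to reduce the statement directly to the standard fact that orthogonal projection onto a complete geodesic in the hyperbolic plane is $1$-Lipschitz; granting this, the lemma is almost immediate from the definition of $f$ as an argmax.

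First I would invoke (and, if necessary, briefly verify using elementary hyperbolic trigonometry in the Poincar\'e disk model) the classical fact that, for any doubly-infinite geodesic $\Ga$ of $\sH$ and any two points $u,v\in\sH$ with orthogonal projections $\pi(u),\pi(v)$ onto $\Ga$,
\begin{equation*}
\rho(\pi(u),\pi(v))\le \rho(u,v).
\end{equation*}
This is the hyperbolic analogue of the familiar Euclidean statement and is a standard consequence of negative curvature (see \cite{CFKP,Iver}). Applying it to $\Ga=\Ga_f$, for any edge $e=\langle x,y\rangle\in E$ one has $\rho(\pi(x),\pi(y))\le \rho(x,y)=\rho(e)$, where the final equality uses the canonical embedding, in which edges of $G$ are themselves geodesic segments. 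Since $E\subseteq E_*$, the defining property of $f$ yields $\rho(f)\ge \rho(e)$, and chaining the two inequalities gives
\begin{equation*}
\rho(f)\ge \rho(e)\ge \rho(\pi(x),\pi(y)),
\end{equation*}
which is precisely \eqref{eq:106}. Hence $f$ is maximal.

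The only bookkeeping worth mentioning is well-definedness of the argmax: by Remark \ref{rem:emb}(d), $G_*$ is quasi-transitive, and by Theorem \ref{p21}(a,c) its automorphisms extend to isometries of $\sH$, so the set $\{\rho(e):e\in E_*\}$ takes only finitely many distinct values and the supremum is attained. I do not anticipate any substantive obstacle; the entire content of the lemma is a direct dressing of the $1$-Lipschitz property of hyperbolic projection, and the more substantial work comes later in the analogous but fiddlier Lemma \ref{lem:22} referenced in the statement.
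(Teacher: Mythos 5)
Your proof is correct and is essentially the paper's own argument: the paper proves Lemma \ref{lem:75} "in the same manner as" Lemma \ref{lem:22}, i.e., by the Lambert-quadrilateral fact that projection onto $\Ga_f$ is $1$-Lipschitz, combined with $\rho(f)\ge\rho(e)$ for $e\in E\subseteq E_*$ from the argmax property. Your added remark on attainment of the maximum (finitely many edge-orbits under isometries) is a harmless and correct piece of bookkeeping.
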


Here is the main theorem for quasi-transitive graphs using the metric method.

\begin{theorem}\label{t72qq}
Let $G\in\sQ$ be one-ended but not a triangulation. 
Assume that $G$ satisfies the metric criterion of Definition \ref{def:max}. 
Then $G$ has the property $\Pi$ of Definition \ref{def10}.
\end{theorem}

See Sections \ref{ssec:proof72} and \ref{ssec:qtemb}
for the proofs of Theorem \ref{thm:trans}, Lemma \ref{lem:75},  and 
Theorem \ref{t72qq} by the metric method.

\begin{remark}\label{rem:nonnece}
The condition of Theorem \ref{t72qq} is sufficient but not necessary, as indicated
by the following example. Let $G$ be the canonical tiling of $\RR^2$ illustrated in Figure \ref{fig:notnec}. 
By inspection,
no diagonal is maximal, whereas $G$ has property $\Pi$.
The sufficient condition in question can be weakened as explained in Remark \ref{rem:nonnece2},
and the above example satisfes the weaker condition.
\end{remark}

\section{Some observations}\label{sec:5}

\subsection{Oxbow-removal}\label{ssec:4.1}
We begin by describing a technique of loop-removal  (henceforth referred to as \lq oxbow-removal').
Let $H$ be a simple graph embedded in the Euclidean/hyperbolic plane $\sH$ (possibly with crossings). 

\begin{lemma}\label{lem:cred}
Let $H$ be a graph embedded in $\sH$.
\begin{letlist}
\item
Let $C$ be a plane cycle of $H$ that surrounds a point $x\notin H$. 
There exists a non-empty subset 
$C'$ of the vertex-set of $C$ that forms a plane, \nst\ cycle of $H$ and surrounds $x$.
\item
Let $\pi$ be a finite (\resp, infinite) path with endpoint $v$.
There exists a non-empty subset 
$\pi'$ of the vertex-set of $\pi$ that forms a finite (\resp, infinite)  \nst\ path of $H$ starting at $v$.
If $\pi$ is finite, 
then $\pi'$ can be chosen with the same endpoints as $\pi$.
\end{letlist}
\end{lemma}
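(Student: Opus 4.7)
My approach for both parts is iterative oxbow-removal: whenever the current walk or cycle has two non-consecutive vertices that are $H$-adjacent, I excise the intermediate segment and continue. In the finite cases this terminates because each excision strictly reduces the number of vertices; in the infinite case a single greedy construction does the work in one go.

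For part (b) in the finite case, the cleanest route is to argue that any geodesic of the induced subgraph $K:=H[V(\pi)]$ from $v$ to the other endpoint of $\pi$ is automatically non-self-touching: a non-consecutive adjacency on such a geodesic would provide a shortcut, contradicting minimality of length, and every vertex of such a geodesic lies in $V(\pi)$ by construction. For the infinite case, write $\pi=(v_0,v_1,v_2,\dots)$ with $v_0=v$; since an infinite non-self-touching path has pairwise distinct vertices, I may assume the $v_i$ are distinct, extracting a simple sub-walk first if not. Local finiteness of $H$ together with distinctness of the $v_i$ ensures that each $v_i$ has only finitely many later $\pi$-companions that are $H$-adjacent to it. I then define indices $0=n_0<n_1<\cdots$ recursively by
$$
n_{k+1}:=\max\bigl\{m>n_k \,:\, v_m\sim v_{n_k}\bigr\},
$$
which is finite by the previous remark and non-empty because $n_k+1$ lies in it. The sequence $(v_{n_k})_{k\ge 0}$ is then non-self-touching: an adjacency $v_{n_k}\sim v_{n_\ell}$ with $\ell\ge k+2$ would force $n_{k+1}\ge n_\ell > n_{k+1}$, contradicting the maximality in the definition of $n_{k+1}$.

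For part (a) I iterate the same idea on cycles. If the plane cycle $C$ fails to be non-self-touching, there is a chord $e=\langle v_i,v_j\rangle\in E(H)$ with $|i-j|\ge 2$ cyclically, and the two arcs of $C$ between $v_i$ and $v_j$, each completed by $e$, give two smaller cycles $C_1,C_2$ of $H$ using only vertices of $V(C)$. I would argue that at least one of $C_1,C_2$ is a plane cycle of $H$ still surrounding $x$, and iterate until no chord remains. \textbf{The main obstacle} I anticipate is the topological verification: since $H$ is allowed to have crossings, the chord $e$ need not be drawn as a Jordan arc inside the bounded face of $C$, so \emph{a priori} neither $C_1$ nor $C_2$ need bound a planar disc at all. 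I expect to deal with this by first choosing, among all chords of the current cycle, one whose drawing meets $C$ in the fewest points of $\sH$ apart from its endpoints: if that minimum is positive, the first crossing yields a shorter chord (replace the tail of $e$ by the relevant arc of $C$) and contradicts minimality, so the minimum is zero and $e$ is a Jordan arc lying in the closure of one face of $C$; the standard Jordan-curve splitting then produces two plane subcycles, and the one whose bounded component contains $x$ (which exists because $x\notin H$ lies in the bounded component of $\sH\setminus C$) is the required replacement. Iterating until the cycle has no chord yields the desired $C'$.
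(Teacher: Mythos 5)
Your part (b) is correct and takes a genuinely different, and in fact tighter, route than the paper. The paper disposes of (b) with "a similar argument" (iterated oxbow-removal, plus the special case of adjacent endpoints), which for an \emph{infinite} path leaves a termination issue implicit; your two one-shot constructions avoid this entirely. The geodesic argument in the induced subgraph $H[V(\pi)]$ is clean: since the subgraph is induced, a non-consecutive $H$-adjacency on the geodesic is a shortcut, and the endpoints are preserved as required. The greedy choice $n_{k+1}=\max\{m>n_k: v_m\sim v_{n_k}\}$ for the infinite case is also correct, using local finiteness (available here, since all embeddings are proper) to make the maximum finite; the verification of the \nst\ property is exactly right.

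For part (a) you follow the same iteration as the paper (pick a chord, keep the sub-cycle surrounding $x$, recurse on the strictly shorter cycle), and you correctly identify the topological point that the paper passes over in silence: the chord $e$ must be drawable as a Jordan arc meeting $C$ only at its endpoints for the theta-curve splitting to apply and for the retained sub-cycle to remain plane. However, your proposed repair of this point does not work as stated. You choose a chord minimizing the number of its crossings with $C$ and claim that a positive minimum yields "a shorter chord" by replacing the tail of $e$ beyond its first crossing with an arc of $C$. But the concatenation of a sub-arc of the drawing of $e$ with an arc of $C$ is not an edge of $H$, so it is not a chord at all, and no contradiction with minimality is obtained. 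As written, this step is a gap: you have neither shown that a non-crossing chord exists, nor handled the case where every chord crosses $C$. (The paper's own proof simply asserts that one of the two sub-cycles surrounds $x$; this is immediate when the chord avoids $C$, which holds for the plane graphs $G$ and $\Gd$, but for $G_*$, whose diagonals may cross, the point genuinely requires the kind of care you are attempting.) To close the gap you would need either to restrict to chords realizable without crossing $C$ and show such a chord exists whenever any chord exists, or to argue directly, using the structure of $G_*$ (crossings occur only between diagonals of a common face), that the surrounding sub-cycle can always be taken plane.
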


\begin{proof}
(a) Let $C=(v_0,v_1,\dots,v_n, v_{n+1}=v_0)$ be a plane cycle of $H$ that surrounds $x\notin H$; we shall apply an 
iterative process of \lq loop-removal' to $C$, and may assume $n\ge 4$. 
We start at $v_0$ and move around $C$ in 
increasing order of vertex-index. 
Let $J$ be the least $j \le n$ such that there exists $i\in\{1,2,\dots,j-2\}$ 
with $v_i\sim v_J$, and let $I$ be the earliest such $i$. 
Consider the two cycles 
$C'=(v_I,v_{I+1},\dots,v_{J},v_I)$ and $C''= (v_J,v_{J+1},\dots, v_0,v_1.\dots v_I,v_J)$.
(These cycles are called \emph{oxbows} since they arise through cutting 
across a bottleneck of the original cycle $C$.)
Since $C$ surrounds $x$, so does at least one of $C'$ and $C''$, and we suppose for 
concreteness that $C''$ surrounds $x$. 
We replace $C$ by $C''$.  This process is iterated until no such oxbows remain. 

(b) This part is proved by a similar argument. When the endpoints $v_0$, $v_n$ of $\pi$ are not neighbours,
we use oxbow-removal as above; otherwise, we set $\pi'= (v_0, v_n)$. 
\end{proof}

\begin{remark}\label{rem:nst}
Lemma \ref{lem:cred} will be used in the following context. 
Firstly, one may apply oxbow-removal to certain paths of a planar graph in order to obtain a 
non-self-touching subpath (see the forthcoming Lemma
\ref{prop:10}).  Similarly, oxbow-removal may sometimes
be used to generate a non-self-touching subpath of a concatenation of two non-self-touching paths.
\end{remark}

Path-surgery will be used in the forthcoming proofs: that is, the replacement of certain paths by others.
Consider a one-ended $G\in\sQ$, embedded canonically in the hyperbolic plane $\sH$,
which for concreteness we consider here in the Poincar\'e disk model (see \cite{CFKP}), also denoted $\sH$.
By Theorem \ref{p21}(c), every automorphism of $G$ extends to an isometry of $\sH$.
Let $\sF$ be the set of faces of $G$. For $F\in \sF$ and $x,y\in V(\pd F)$, let $\sL_{x,y}$
be the set of rectifiable curves with endpoints $x$, $y$ whose interiors are subsets of $F^\circ\sm E$,
and write
$l_{x,y}$ for the infimum of the hyperbolic lengths of all $l\in\sL_{x,y}$. 
Let 
$$
\diam(F)=\sup\{l_{x,y}: x,y \in V(\pd F)\},
$$
and 
\begin{equation}\label{eq:rho}
\Md=\max\{\diam(F): F\in\sF\}.
\end{equation}
By the properties of $G$, and in particular Theorem \ref{p21}(c), we have $\Phi<\oo$. 

Let $L$ be a geodesic of $\sH$ with endpoints in the boundary of $\sH$.
Denote by $L_\de$ the 
closed, hyperbolic $\de$-neighbourhood of $L$ (see Figure \ref{fig:longpath});
we call $L_\delta$ a \emph{hyperbolic tube}, and we say $L_\de$ 
has \emph{width} $2\de$.
Write $\pd^+ L_\de$ and $\pd^- L_\de$ for the two
boundary arcs of $L_\de$. 
An arc $\gamma$ of $\sH$ is said to \emph{cross $L_\de$ laterally} if it intersects both $\pd^+ L_\de$ and
$\pd^-L_\de$.
A path $\pi=(\dots,x_{-1},x_0,x_1,\dots)$ of $G$ (or $\Gd$) 
is said to \emph{cross $L_\de$
in the long direction} if, for any arc $\gamma$ that crosses $L_\de$ laterally and intersects no vertex of $G$, 
the number of intersections between $\gamma$ and $\pi$, if finite,  is odd.  

\begin{lemma}\label{prop:10}
Let $G=(V,E)\in\sQ$ be one-ended and embedded canonically in the Poincar\'e disk $\sH$,
and let $L_\de$ be a hyperbolic tube.
\begin{letlist}
\item If $2\de>\Md$, then $L_\de$ contains a \dinst\ of $G$, and a \dinst\ of  $G_*$,  that cross $L_\de$ in the long direction.

\item There exists $\zeta=\zeta(G)$ (depending on $G$ and its embedding) 
such that, for $r>\zeta$ and $v\in V$,  
the annulus $\La_r(v)\sm\La_{r-\zeta}(v)$
contains a
\nst\ cycle of $G$ (\resp, $G_*$) denoted $\si_r(v)$ (\resp, $\si^*_r(v)$)
 such that $v\in \si_r(v)^\circ$ (\resp, $v\in \si^*_r(v)^\circ$).
\end{letlist}
\end{lemma}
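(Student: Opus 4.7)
My plan for part (a) is to thread a doubly-infinite walk of $G$ along the geodesic axis $L$ of the tube and then extract a non-self-touching sub-path via Lemma \ref{lem:cred}. After a small perturbation so that $L$ meets no vertex of $G$, I enumerate the faces $(F_i)_{i\in\ZZ}$ that $L$ crosses in order; consecutive $F_i, F_{i+1}$ share an edge $e_i$, and $L$ crosses $e_i$ transversally. I select one endpoint $v_i$ of $e_i$ (chosen on a fixed side of $L$, or as the endpoint nearer to $L$); since both $v_{i-1}$ and $v_i$ lie on the cycle $\partial F_i$, I join them by an arc $\pi_i\subseteq \partial F_i$ along that side. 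The hypothesis $2\delta>\rho$, together with the uniform control on face diameters, ensures $\pi_i\subseteq L_\delta$. Concatenating yields a doubly-infinite walk $W$ in $G\cap L_\delta$ joining the two ends of the tube. Applying a doubly-infinite version of Lemma \ref{lem:cred}(b) removes oxbows and produces a \nst\ sub-path $\pi$. Any lateral arc $\ga$ meets $W$ transversally (by planarity), and oxbow-removal preserves the parity of intersections (finite closed excursions are crossed evenly by $\ga$), so $\pi$ crosses $L_\delta$ in the long direction. For $G_*$, the same construction applies, with diagonals offering natural shortcuts inside each $F_i$.

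For part (b), I replace the straight axis $L$ by a hyperbolic circle $C$ of large radius centred at the embedded image of $v$, and replay the thread-and-oxbow argument around $C$. The walk obtained by threading through faces crossed by $C$ winds once around $v$; Lemma \ref{lem:cred}(a) then supplies a non-self-touching cycle $\sigma_r(v)$ with $v\in \sigma_r(v)^\circ$, lying inside a hyperbolic annulus around $v$ of width bounded in terms of $\rho$ only, independently of $r$ and $v$. The graph metric $d_G$ and the ambient hyperbolic metric, restricted to $V$, are quasi-isometric with constants uniform in the base-point---a consequence of quasi-transitivity together with Theorem \ref{p21}(c)---so this hyperbolic annulus is contained in a graph-distance annulus of some uniform width $\zeta=\zeta(G)$. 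Tuning the radius of $C$ places the resulting cycle inside $\ol\La_r(v)\sm\ol\La_{r-\zeta}(v)$ whenever $r>\zeta$, and using diagonals in place of face-boundary detours yields the $G_*$ version $\si^*_r(v)$.

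The main obstacle I anticipate is confirming that the constructed walk/cycle sits strictly inside the prescribed neighbourhood of the guide curve. In (a) this rests delicately on the quantitative inequality $2\delta>\rho$ and on a judicious choice of the representative vertices $v_i$, exploiting that the endpoint of $e_i$ nearer to $L$ is within $L_{\rho/2}\subseteq L_\delta$; in (b) it relies on the uniform quasi-isometric comparison, which in turn requires quasi-transitivity to make the conversion constant independent of $v$. A secondary technicality is that oxbow-removal preserves the long-direction property in (a) and the surrounding property in (b); both reduce to parity/linking arguments against auxiliary lateral arcs and radial curves, respectively.
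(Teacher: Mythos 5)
Your approach to part (a) is genuinely different from the paper's: you construct the crossing path explicitly by threading through the faces met by the axis $L$, whereas the paper argues by contradiction --- if $L_\de$ is not crossed in the long direction, there is a lateral arc $\ga$ avoiding $E$; such an arc lies in a single face and can be replaced by a curve of length at most $\rho+\eps<2\de$, contradicting the fact that every lateral crossing of $L_\de$ has length at least $2\de$. That duality argument is what makes the sharp hypothesis $2\de>\rho$ work, and your construction does not reach it: the assertion that $\pi_i\subseteq L_\de$ is unjustified. Choosing $v_i$ as the endpoint of $e_i$ nearer to $L$ places the selected vertices in $L_{\rho/2}$, but the connecting arc $\pi_i\subseteq\pd F_i$ passes through the other vertices of $F_i$, each of which is only known to be within intrinsic distance $\diam(F_i)\le\rho$ of $v_i$, hence only within $L_{3\rho/2}$. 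When $\de$ is close to $\rho/2$ this is not contained in $L_\de$; your construction therefore proves the statement only for tubes of width a fixed multiple of $\rho$ (which would in fact suffice for every application in the paper), not as stated. Your parity argument that oxbow-removal preserves the long-direction crossing is correct and coincides with the paper's.

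Part (b) contains a more substantive gap. The comparison between $d_G$ and the hyperbolic metric furnished by quasi-transitivity and Theorem \ref{p21}(c) is a quasi-isometry with a multiplicative constant $a$ that is in general strictly greater than $1$ (graph geodesics exceed hyperbolic distance by a definite factor). Consequently the vertices lying in a hyperbolic annulus of fixed width at radius $R$ about $v$ have graph distances from $v$ spread over an interval of length of order $(a-a^{-1})R$, which grows with $R$; such an annulus is \emph{not} contained in any graph annulus $\ol\La_r(v)\sm\ol\La_{r-\zeta}(v)$ with $\zeta$ independent of $r$. The uniform $\zeta$ must instead be obtained combinatorially: for instance, take the boundary walk of the unbounded component of the complement of the union of all closed faces meeting $\ol\La_s(v)$; by quasi-transitivity (bounded face sizes) its vertices lie in a graph annulus of bounded width about radius $s$, it surrounds $v$ by one-endedness, and Lemma \ref{lem:cred}(a) then extracts the required non-self-touching cycle, with the $G_*$ version obtained in the same way.
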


A more refined result may be found in Section \ref{sec:emb2}.

\begin{figure}
\centerline{\includegraphics[width=0.45\textwidth]{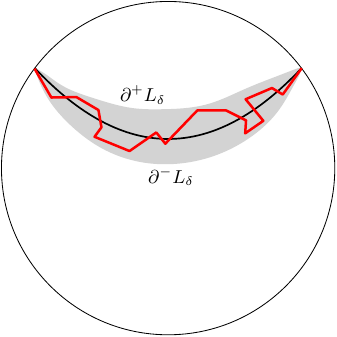}}
\caption{An illustration of Lemma \ref{prop:10}. 
The jagged (red) path crosses $L_\de$ in the long direction.}\label{fig:longpath}
\end{figure}

\begin{proof}
(a) Since all faces of $G$ are bounded, there exist vertices of $G$ in both components of $\sH\sm L_\de$. 
Now, $L_\de$ fails to be crossed in the long direction if and  only if it contains some arc 
$\gamma$ that traverses it laterally and that 
intersects no edge of $G$.
To see the \lq only if' statement, let $V^-$ and $V^+$ be the subsets of $V\cap L_\de$ that
are joined in $G\cap L_\de$ to the two boundary points of $L$, \resp; if $V^-\cap V^+=\es$, then
there exists such $\ga$ separating $V^+$ and $V^-$ in $L_\de$. 
For this $\gamma$, there exists a face $F$ and points $x,y\in V(\pd F)$, such that
$\gamma\subseteq \lambda$ for some $\lambda\in\sL_{x,y}$. 
Let $\eps\in(0,2\de-\Md)$, 
and find $\lambda'\in\sL_{x,y}$
with length not exceeding $l_{x,y}+\eps$. We may replace $\gamma$ by 
some subarc $\gamma'$ of $\lambda'\cap L_\de$.
The length of $\gamma'$ is no greater than $\Md+\eps<2\de$, a contradiction
since $L_\de$ has width $2\de$.
Therefore, $L_\de$ contains some path $\pi$ of $G$ that crosses $L_\de$ in the long direction. 

We apply oxbow-removal in $G$ to
$\pi$ as described in the proof of Lemma \ref{lem:cred}. For any arc $\gamma$ that crosses $L_\de$
laterally and intersects no vertex of $G$,
the number of intersections between $\gamma$ and $\pi$, if finite,
decreases by a non-negative, even number whenever an oxbow is removed.
It follows that the \nst\ path $\pi'$ (obtained after oxbow-removal) 
crosses $L_\de$ in the long direction.
The same conclusion applies to $G_*$ on letting $\pi$ be a path of $G_*$.

\noindent(b)
Let $\zeta$ be such that $\rho(u,v)\ge 2\Md$ whenever $d_G(u,v)\ge\zeta$.
The proof of part (b) follows that of part (a).
\end{proof}

\subsection{Graph properties}\label{ssec:4.2}

The proofs of this article make heavy use of path-surgery which, in turn, relies in part on the property of planarity.

\begin{lemma}\label{olpi}
Let $G\in\sQ$, and let $\pi$ be a (finite or infinite) \nst\ path of $G_*$. 
\begin{letlist}
\item For every face $F$ of $G$, $\pi$ contains either
zero or one or two vertices of $F$. If $\pi$ contains two such vertices $u$, $v$, then it contains also the corresponding 
edge $\langle u,v\rangle$, which may be either an edge of $G$ or a diagonal. 
\item The path $\pi$ is plane when viewed as a graph.
\end{letlist}
\end{lemma}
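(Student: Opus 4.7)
My approach for part (a) hinges on a clique property: for every face $F$ of $G$, the vertex set $V(\pd F)$ is a clique in $G_*$, since by the construction of the matching graph any two distinct vertices on $\pd F$ are joined either by an edge of $G$ or, failing that, by a diagonal added inside $F$. Granted this, part (a) follows directly from the non-self-touching condition. If $\pi=(\dots,x_{-1},x_0,x_1,\dots)$ contains three distinct vertices $x_i,x_j,x_k$ of $V(\pd F)$ with $i<j<k$, then $|k-i|\ge 2$ yet $x_i\sim x_k$ in $G_*$ by the clique property, contradicting Definition \ref{def:nst}; hence $\pi$ meets $V(\pd F)$ in at most two vertices. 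If $\pi$ meets $V(\pd F)$ in exactly two vertices $u=x_i$, $v=x_j$ with $i<j$, the clique property gives $u\sim v$ in $G_*$, and non-self-touching forces $|i-j|=1$; thus $\langle u,v\rangle$ is one of the edges of $\pi$, and by the construction of $G_*$ it lies either in $E$ or is a diagonal.

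For part (b), I would rule out crossings between edges of $\pi$ case by case, using the embedding of $G_*$ inherited from that of $G$. Two edges of $G$ cannot cross, by the proper embedding of $G$. An edge of $G$ and a diagonal cannot cross, since the interior of any diagonal is contained in $F^\circ\sm E$ for some face $F$, while $G$-edges are disjoint from $F^\circ$; the two can meet at most at a shared endpoint. Two diagonals lying in distinct faces have their interiors in disjoint open faces, so cannot cross. The only remaining possibility is two diagonals inside the same face $F$, and this is where part (a) enters: every diagonal of $F$ has both endpoints in $V(\pd F)$, so part (a) caps the number of diagonals of $F$ appearing in $\pi$ at one, ruling out intra-face crossings.

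The step I anticipate to require the most care is the last one: convincing oneself that the one potentially dangerous case in the embedding of $G_*$, namely two diagonals of the same face, is precisely what part (a) rules out. Once this link is identified, parts (a) and (b) collapse into two facets of the same clique-plus-non-self-touching argument, and the lemma follows without further combinatorial machinery beyond fixing, for each face, any realization of its diagonals as simple arcs with interiors in the open face.
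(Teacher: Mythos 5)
Your argument is correct and is essentially the paper's own proof, made explicit: part (a) is the clique-plus-non-self-touching observation, and part (b) reduces any potential crossing to two diagonals of a single face, which part (a) forbids. No gaps.
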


\begin{proof}
Let $F$ be a face. The path $\pi$ cannot contain three or more vertices of $F$, since that contradicts the
\nst\ property. Similarly, if $\pi$ contains two such vertices, it must contain also the
corresponding edge.
If $\pi$ is non-plane, it contains two or more diagonals of some face, which, by the above, cannot occur.  
\end{proof}

\begin{figure}
\centerline{\includegraphics[width=0.6\textwidth]{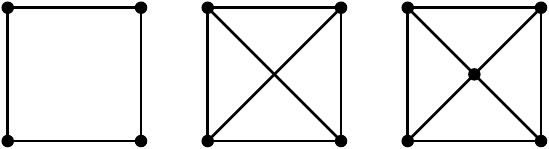}}
\caption{A square of the square lattice, its matching graph, and with its facial site added.}\label{fig:HK}
\end{figure}

As a device in the proof of Theorem \ref{maintheorem}, we shall work with the 
graph $\Gd$ obtained from $G=(V,E)$ by adding a vertex at the centre of each face $F$,
and adding an edge from every vertex in the boundary of $F$ to this central vertex.
These new vertices are
called \emph{facial sites}, or simply \emph{sites} in order
to distinguish them from the \emph{vertices} of $G$. The facial site in the face $F$ is denoted $\phi(F)$. See 
\cite[Sec.\ 2.3]{K82}, and also Figure \ref{fig:HK}. If $\langle v,w\rangle$ is a diagonal of $G_*$, it
lies in some face $F$, and we write $\phi(v,w)=\phi(F)$ for the corresponding facial site.

The main reason for working with $\Gd$ is that it serves to interpolate between $G$ and $G_*$
in the sense of \eqref{eq:three} below: 
we shall assign a parameter $s\in[0,1]$
to the facial sites in such a way that $s=0$ corresponds to $G$ and $s=1$ to $G_*$. It will also be useful that
$\Gd$ is planar whereas $G_*$ is not.

Next, we specify some desirable properties of the graphs $G_*$ and $\Gd$.
Recall the property $\Pi$ of Definition \ref{def10}.

\begin{definition}\label{def:13}
The graph  $G\in\sQ$ is said to have property
$\Pif$ if $\Gd$ has a \dinst\  including some facial site.
\end{definition}

\begin{lemma}\label{thm:rels}
Let $G\in\sQ$ be one-ended. Then
$\Pi\Rightarrow \Pif$.
\end{lemma}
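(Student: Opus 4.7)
The plan is a direct subdivision: take a \dinst\ $\pi=(\dots,x_{-1},x_0,x_1,\dots)$ of $G_*$ that traverses some diagonal and, for each index $i$ at which $\langle x_i,x_{i+1}\rangle$ is a diagonal lying in a face $F_i$ of $G$, replace that edge by the two-edge sub-path $(x_i,\phi(F_i),x_{i+1})$ in $\Gd$. The resulting vertex sequence $\widehat\pi$ is doubly infinite, uses only edges of $\Gd$, and contains at least one facial site by construction. It remains to check that the vertices of $\widehat\pi$ are distinct and that $\widehat\pi$ is \nst\ in $\Gd$.

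For distinctness, the $x_i$ are distinct because $\pi$ is a path, and any inserted facial site is automatically distinct from every $x_j$. To see that the inserted facial sites are also pairwise distinct, suppose $\phi(F_i)=\phi(F_j)$ for some diagonal indices $i<j$. Then $\{x_i,x_{i+1},x_j,x_{j+1}\}\subseteq V(\pd F_i)$, so in particular $x_i\sim_{G_*} x_j$. The \nst\ property of $\pi$ forces $|i-j|\le 1$, and the remaining case $j=i+1$ is excluded because $x_i$ and $x_{i+2}$ would both lie on $\pd F_i$, contradicting \nst\ at $\pi$-distance $2$.

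The core step is to verify that $\widehat\pi$ is \nst\ in $\Gd$. Take two vertices $u,v$ of $\widehat\pi$ at $\widehat\pi$-distance at least $2$. If both are original vertices, $u=x_a$, $v=x_b$, then either $|a-b|\ge 2$, in which case the \nst\ property of $\pi$ yields $x_a\not\sim_{G_*}x_b$ and hence $x_a\not\sim_\Gd x_b$; or $|a-b|=1$, in which case $\langle x_a,x_b\rangle$ was a diagonal we subdivided away, so $\langle x_a,x_b\rangle\notin E$ and again $x_a\not\sim_\Gd x_b$. If both are facial sites, they are never adjacent in $\Gd$. The remaining case is $u=\phi(F_i)$ and $v=x_k$ with $k\notin\{i,i+1\}$: a $\Gd$-adjacency would force $x_k\in V(\pd F_i)$, whence $x_k\sim_{G_*}x_i$ and $x_k\sim_{G_*}x_{i+1}$; the \nst\ property of $\pi$ then forces $|k-i|\le 1$ and $|k-(i+1)|\le 1$, whose only joint solutions are $k\in\{i,i+1\}$, a contradiction.

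The proof is essentially elementary, and its only delicate point is the last case analysis, where one needs to apply the \nst\ hypothesis on $\pi$ simultaneously against \emph{both} endpoints of the substituted diagonal; pure self-avoidance of $\pi$ would not suffice. No hyperbolic or percolation input is required for this implication, which is why $\Pi\Rightarrow\Pif$ is expected to be the easy direction.
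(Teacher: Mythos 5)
Your proposal is correct and follows essentially the same route as the paper: subdivide each diagonal $\langle x_i,x_{i+1}\rangle$ of $\pi$ by inserting the facial site $\phi(x_i,x_{i+1})$, then check that the result is \nst\ in $\Gd$. The paper compresses your final case analysis into a citation of Lemma \ref{olpi} (a \nst\ path of $G_*$ meets each face in at most two vertices, which is exactly what rules out a $\Gd$-adjacency between $\phi(F_i)$ and some $x_k$ with $k\notin\{i,i+1\}$), so your verification is just that lemma unpacked.
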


\begin{proof}
Let $G$ have property $\Pi$ and let $\pi$ be a \dinst\ of $G_*$. For any two consecutive vertices $u$, $v$ of $\Pi$ 
such that $\de(u,v)$ is a diagonal, we add between $u$ and $v$ the facial site $\phi(u,v)$. The result is a 
doubly-infinite path $\pi'$ of $\Gd$. By Lemma \ref{olpi}, $\nu'$ is \nst\ in $\Gd$, whence $G$ has property $\Pif$.
\end{proof}

The properties of Definition \ref{def:13} 
are \lq global' in that they concern the existence of \emph{infinite} paths.
It is sometimes preferable to  work in the proofs with \emph{finite} paths, and to that end we introduce corresponding \lq local'  properties. 

Let $\zeta(G)$ be as in Lemma \ref{prop:10}(b). We shall
make reference to the \nst\ cycles $\si_r(v)$, $\si_r^*(v)$ given in that lemma.
We write $\what \si_r(v)$ for the \nst\ cycle of $\Gd$ obtained from $\si_r^*(v)$ 
by replacing any diagonal by a path of length $2$ passing via the appropriate facial site of $\Gd$. 
We abbreviate the closure of the region surrounded 
by $\si_r^*$ (\resp, $\what\si_r$) to $\ol\si_r^*$ (\resp, 
$\ol{\what\si}_r$).
Let $A(G)$ be the real number given as
\begin{equation}\label{eq:ag}
A(G) = \zeta(G)+\max\{d_G(u,w): \langle u,w\rangle \in E_*\sm E\}.
\end{equation}

\begin{definition}\label{def10'}
Let $A\in\ZZ$, $A> A(G)$, and let $G\in\sQ$ be one-ended.
\begin{letlist}
\item The graph $G$ is said to have property $\Pi_A$ if there exists a vertex $v\in V$ 
and  a \nst\ path $\pi=(x_0,x_1,\dots,x_n)$ of $G_*$ such that 
\begin{romlist}
\item every vertex of $\pi$ lies in $\ol\si_A^*(v)$, and $x_0,x_n\in \sias(v)$,
\item there exists $i$ such that $x_i=v$,
\item the pair $v$, $x_{i+1}$ forms a diagonal of $G_*$, which is to say that $\phi:=\phi(v,x_{i+1})$
is a facial site of $\Gd$.
\end{romlist}
\item 
The graph $G$ is said to have property $\what\Pi_{A}$ if there exist vertices $v,w\in V$ 
and  a \nst\ path $\pi=(x_0,x_1,\dots,x_n)$ of $\Gd$ such that 
\begin{romlist}
\item every vertex of $\pi$ lies in $\ol{\what\si}_A(v)$, and $x_0,x_n\in \what\si_A(v)$,
\item there exists $i$ such that $x_i=v$, $x_{i+2}=w$,
\item $x_{i+1}$ is the facial site $\phi(v,w)$ of $\Gd$.
\end{romlist}
\end{letlist} 
\end{definition}

\begin{figure}
\centerline{\includegraphics[width=0.35\textwidth]{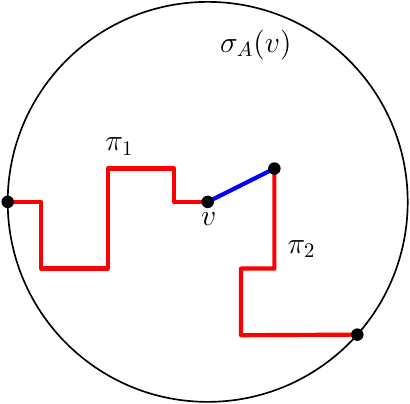}}
\caption{An illustration of the property $\Pi_A$:
a \nst\ path of $G_*$ containing a diagonal near its middle.}\label{fig:def10}
\end{figure}

That is to say, $G$ has property $\Pi_A$ (\resp, $\what\Pi_A$)
if $G_*$ (\resp, $\Gd$) contains a finite, \nst\ path of sufficient length that contains
some diagonal (\resp, facial site). This definition is illustrated in Figure \ref{fig:def10}.
Note that $\Pi_{A+1}$ (\resp, $\Pif_{A+1}$) implies $\Pi_A$ (\resp, $\Pif_A$) for sufficiently large $A$.

\begin{theorem}\label{thm:13}
Let $G\in\sQ$ be one-ended.
There exists $A'(G)\ge A(G)$ such that, for $B > A'(G)$, we have 
$\Pi \Leftrightarrow \Pi_B$ and $\Pi \Rightarrow \what\Pi_B$.
\end{theorem}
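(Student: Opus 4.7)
The plan is to establish the three implications $\Pi\Rightarrow\what\Pi_A$, $\Pi\Rightarrow\Pi_A$, and $\Pi_A\Rightarrow\Pi$ for $A$ exceeding a threshold $A'(G)\ge A(G)$ chosen sufficiently large during the argument. The unifying tool is a Jordan-curve argument in the planar graph $\Gd$ together with the \nst\ cycles $\si_A^*(v)$ and $\what\si_A(v)$ furnished by Lemma \ref{prop:10}(b).

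For $\Pi\Rightarrow\what\Pi_A$, I would start with a \dinst\ $\pi$ of $G_*$ containing a diagonal $\langle v,w\rangle$, and lift it to a \dinst\ $\pi'$ of $\Gd$ by inserting facial sites at each diagonal of $\pi$, as in the proof of Lemma \ref{thm:rels}. Since $\Gd$ is planar and the cycle $\what\si_A(v)$ surrounds $v$, the Jordan curve theorem forces $\pi'$ to exit $\ol{\what\si}_A(v)$ on both sides of $v$ through vertices of $\what\si_A(v)$. Truncating $\pi'$ at the first exit on each side of the triple $v,\phi(v,w),w$ yields a finite \nst\ subpath satisfying the conditions of $\what\Pi_A$, provided $A>A(G)$ so that the triple fits strictly inside $\ol{\what\si}_A(v)$.

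The implication $\Pi\Rightarrow\Pi_A$ follows a similar truncation, but the endpoints must now lie in $V\cap\si_A^*(v)$ rather than merely in $\what\si_A(v)$. If the first exit $y^+$ of $\pi'$ lies in $V$, then the corresponding vertex of $\pi$ sits on $\si_A^*(v)$ and truncation of $\pi$ in $G_*$ suffices. Otherwise $y^+=\phi(F)$ for some face $F$, in which case the construction of $\what\si_A(v)$ forces the cycle $\si_A^*(v)$ to use a diagonal $\langle a,b\rangle$ of $F$, while $\pi$ uses the distinct diagonal $\langle c,d\rangle$ of $F$ associated with $\phi(F)$; minimality of $y^+$ then gives $c,d\notin\{a,b\}$, with $c$ strictly inside and $d$ strictly outside $\ol\si_A^*(v)$. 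I would reroute $\pi$ from $c$ to $a$ along the arc of $\pd F$ avoiding $b$ and $d$, and truncate at $a\in\si_A^*(v)$. For the converse $\Pi_A\Rightarrow\Pi$, I would extend each endpoint of the finite path of $\Pi_A$ outward to infinity by an inductive construction: choose radii $A=A_0<A_1<\cdots$ with $A_{k+1}-A_k>\zeta(G)+A(G)$, and at each stage append a \nst\ path of $G_*$ bridging $\si_{A_k}^*(v)$ to $\si_{A_{k+1}}^*(v)$ inside the intervening annulus, as guaranteed by Lemma \ref{prop:10}(b).

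The principal obstacle in both non-trivial directions is verifying the \nst\ property at the various junctions: the rerouting along $\pd F$ in $\Pi\Rightarrow\Pi_A$, and the tail-to-core joins in $\Pi_A\Rightarrow\Pi$. A newly inserted vertex near $\si_A^*(v)$ lies in the boundary of several faces and could in principle form a $G_*$-edge with a distant path vertex sharing one of those faces. The remedy is to choose $A'(G)$ large enough that, via the definition \eqref{eq:ag} of $A(G)$ and the $\zeta$-annular separation of Lemma \ref{prop:10}(b), all potentially conflicting vertices lie at $G$-distance exceeding $\max\{d_G(e):e\in E_*\setminus E\}+1$, forcing them to be at $G_*$-distance at least $2$; this makes \nst\ automatic except for the local adjacencies within a single annulus, which are themselves controlled by the \nst\ property of $\pi$ or of the bridging paths.
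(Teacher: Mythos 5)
The directions $\Pi\Rightarrow\Pi_A$ and $\Pi\Rightarrow\what\Pi_A$ are handled essentially as in the paper (truncation of a doubly-infinite path, after lifting to $\Gd$ in the second case), and your extra care about whether the first exit lands on a vertex of $\sias(v)$ rather than jumping over it by a diagonal is a reasonable refinement of a step the paper declares evident. The problem lies in the converse $\Pi_A\Rightarrow\Pi$, which is where the real content of the theorem sits.

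Your plan there is to extend each endpoint of the finite path outward through a sequence of concentric annuli, appending at each stage a \nst\ bridge from $\si_{A_k}^*(v)$ to $\si_{A_{k+1}}^*(v)$ \lq\lq as guaranteed by Lemma \ref{prop:10}(b)''. That lemma guarantees \nst\ \emph{cycles} surrounding $v$ in such annuli, not radial bridges starting from a \emph{prescribed} point of the inner cycle; no result in the paper supplies such bridges, and producing one by connectivity plus oxbow-removal gives no control over where it wanders. More seriously, the central difficulty of this implication is that the \emph{two} infinite tails emanating from the two ends of the finite path must remain mutually \nt\ forever, and your construction sends both tails through every annulus $\ol\La_{A_{k+1}}(v)\sm\ol\La_{A_k}(v)$ with nothing to keep them apart angularly; your proposed remedy (choosing $A'(G)$ large so that vertices in distant annuli are automatically at $G_*$-distance $\ge 2$) only separates vertices radially and does not address two bridges in the \emph{same} annulus. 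The paper resolves exactly this: it first routes the two half-paths, via the case analysis on $D$ in \eqref{eq:text2'}, to two \nt\ arcs $\si^1,\si^2$ covering almost all of $\sias(v)$, and then invokes Lemma \ref{prop:10}(a) (hyperbolic tubes crossed in the long direction) to produce a single \dinst\ of $G_*$ whose two ends run off to infinity in opposite directions and hook onto $\si^1$ and $\si^2$ respectively; the largeness of $A'(G)$ is needed precisely to fit these tubes. Without some substitute for that mechanism, your argument does not yield a doubly-infinite \nst\ path, so the key implication is not established.
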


The proof of this useful theorem utilises some methods of path-surgery that will be important later, and it
is given next.

\subsection{Proof of Theorem \ref{thm:13}}\label{ssec:4.3}

(a) Let $A>A(G)$.
First, we prove that $\Pi\Leftrightarrow \Pi_A$.
Evidently, $\Pi\Rightarrow\Pi_A$.
Assume, conversely, that $\Pi_A$ holds for some $A > A(G)$. 
Let the \nst\ path $\pi=(x_0,x_1,\dots,x_n)$ of $G_*$, the vertex $v=x_i$,
and the diagonal $d=\langle v,x_{i+1}\rangle$ be as in Definition \ref{def10'}(a); 
think of $\pi$ as a directed
path from $x_0$ to $x_n$, and note by Lemma \ref{olpi} that $\pi$ is a plane graph. 
We abbreviate $\sias(v)$ to $\sias$. Let
$$
\pd^-\sias = \{y\in(\sias)^\circ: \dgs(y,\sias)=1\}.
$$

\begin{figure}
\centerline{\includegraphics[width=0.8\textwidth]{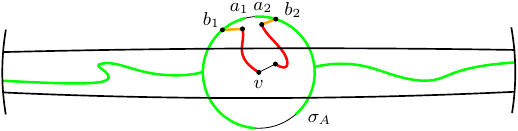}}
\caption{In the easiest case when $D \ge 2$, one finds (green) \nt\ subarcs $\si_A^i$ of $\si_A$ to which 
$v$ may be connected by \nst\ paths. These subarcs may be 
connected to the boundary of $\sH$ using subpaths of a doubly-infinite path constructed using Lemma \ref{prop:10}(a).}\label{fig:longpath2}
\end{figure}

Let $\pi_1$ be the subpath of $\pi$ from $v$ to $x_0$, and $\pi_2$ that from $x_{i+1}$ to $x_n$. 
Let $a_i$ be the earliest vertex/site of $\pi_i$ lying in $\pd^-\si_A$. See the
central circle of Figure \ref{fig:longpath2}.
We claim the following.
\begin{equation} \label{eq:text2'} 
\text{\begin{tabular}{p{0.85\linewidth}}
There exist two \nt\ subpaths $\si^1$, $\si^2$ of $\sias$, each of length at least
$\tfrac12|\sias|-4$, such that: (i) for $i=1,2$,  the subpath of $\pi_i$ leading to $a_i$ may be extended beyond
$a_i$ along $\si^i$
to form a \nst\ path ending at any prescribed $y_i\in\si^i$, and (ii) the composite path thus created (after oxbow-removal if necessary) is \nst.
\end{tabular}
}
\end{equation}

The proof of \eqref{eq:text2'} follows. Let 
\begin{equation}\label{eq:51}
A_i=\{b\in \sias: \dgs(a_i,b)=1\}, \q 
D=\max\{\dgs(b_1,b_2): b_1\in A_1,\, b_2\in A_2\}.
\end{equation}

\smallskip\noindent
{\bf Suppose $D\ge2$}. Choose $b_i\in A_i$ such that $\dgs(b_1,b_2)\ge 2$.  
As illustrated in the centre of 
Figure \ref{fig:longpath2}, we may find a \nt\ pair of \nst\ subpaths of $\sias$
such that the conclusion of \eqref{eq:text2'} holds. Some oxbow-removal may be needed at the junctions of paths
(see Remark \ref{rem:nst}).

\begin{figure}
\centerline{\includegraphics[width=0.4\textwidth]{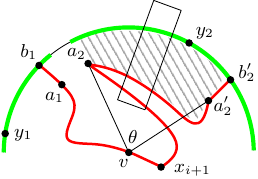}}
\caption{An illustration of the case $D=1$.
The green lines indicate the subpaths $\si_A^i$.
The rectangle is added  in illustration of the case $\theta\ge\frac34\pi$.}\label{fig:hard2'}
\end{figure}

\smallskip\noindent
{\bf Suppose $D=1$}. We may picture $\sias$ as a (topological) circle with centre $v$, and for
concreteness we assume that $a_2$ lies clockwise of $a_1$ around $\si^*_A$ (a similar argument holds if not).
See Figure \ref{fig:hard2'}.  
\begin{Alist}
\item Suppose the path  $\pi_1$, when 
continued beyond $a_1$, passes at the next step to some $b_1\in A_1$, 
and add $b_1$ to obtain a path denoted $\pi_1'$.   

Since $D=1$, the next step of $\pi_2$ beyond $a_2$ is not into $A_2$.
On following $\pi_2$ further, it moves inside $(\sias)^\circ$ until it arrives 
at some point $a_2'\in\pd^-\sias$ having some neighbour $b_2'\in\sias$ satisfying 
$\dgs(b_1,b_2')\ge 2$;
we then include the subpath of $\pi_2$ between $a_2$ and $b_2'$ to obtain a path denoted $\pi_2'$.

We declare $\si^1$
to be the subpath of $\sias$ starting at $b_1$ and extending a total distance $\frac12|\sias|-4$ around $\sias$ anticlockwise.
We declare $\si^2$ similarly to start at distance $2$ clockwise of $b_1$ and to have the same
length as $\si^1$.  

Let $\theta\in(0,2\pi)$ be the angle subtended by the
vector $\overrightarrow{a_2a_2'}$ at the centre $v$.
If $\theta<\frac34\pi$, say, each $\pi_i'$ may be 
extended along $\si^i$ to end at any prescribed $y_i\in\si^i$. Therefore, claim 
\eqref{eq:text2'} holds in this case. 

The situation can be more delicate if $\theta\ge \frac34\pi$, since then $a_2'$ may be near to $\si^1$.
By the planarity of $\pi$, the region $R$ between $\pi_2'$ and $\sias$ contains no point of $\pi_1'$ ($R$ is the shaded region in Figure \ref{fig:hard2'}).
We position a hyperbolic tube of width greater than $\Md$ 
in such a way
that it is crossed laterally by both $\pi_2'$
and the path $\si^2$ (as illustrated in Figure \ref{fig:hard2'}). 
By   Lemma \ref{prop:10}(a), this tube is crossed in the long direction by some 
path $\tau$ of $G$. The union of $\pi_2'$ and $\tau$ contains a \nst\ path $\pi_2''$ 
of $G_*$ from $x_{i+1}$ to $\si^2$
(whose unique vertex in $\si^2$ is its second endpoint). Claim \eqref{eq:text2'} follows in this situation.

\item
Suppose the hypothesis of part A does not hold, but instead $\pi_2$ passes
from $a_2$ directly into $\si^*_A$. In this case we follow A above with $\pi_1$ and $\pi_2$
interchanged.   

\item
Suppose neither $\pi_i$ passes from $a_i$ in one step into $\sias$. We add $b_2$ to the subpath from $x_{i+1}$
to $a_2$,  and continue as in part A above.

\end{Alist}

\smallskip\noindent
{\bf Suppose $D=0$}. Statement \eqref{eq:text2'} holds by a similar argument to that above.

\smallskip
Having located the $\si^i$ of \eqref{eq:text2'}, we position a hyperbolic tube as in Figure \ref{fig:longpath2},
to deduce (after oxbow-removal, see Remark \ref{rem:nst})  the existence of a \dinst\ of $G_*$ that contains the diagonal $d$. Therefore, $G$ has
property $\Pi$, as required. 

Hyperbolic tubes are superimposed on the graph at two steps of the argument above, 
and it is for this reason that we need $A$ to be sufficiently large, say $A>A'(G)$.

(b) It remains to show that $\Pi\Rightarrow \what\Pi_{A}$ for large $A$.
By Lemma \ref{thm:rels},
$\Pi\Rightarrow \Pif$, and it is immediate that
$\Pif\Rightarrow \what\Pi_{A}$ for large $A$.

\section{Proof of Theorem \ref{maintheorem}}\label{sec:6}

Consider site percolation on $G$ with product measure $\PP_p$, and fix some
vertex $v_0$ of $G$. We write $v\lra w$ if
there exists a path of $G$ from $v$ to $w$ using only open sites (such a path is called \emph{open}), and $v\lra\infty$ if there exists an infinite, open path starting at $v$.
The \emph{percolation probability} is the function $\theta$ given by
\begin{equation}\label{new34}
\theta(p)=\theta(p;G) = \PP_p(v_0 \lra \oo),
\end{equation}
so that the (site) critical probability of $G$ is $\pc(G):= \sup\{p: \theta(p)=0\}$.
The quantities $\theta(p;G_*)$ and $\pc(G_*)$ are defined similarly.

\begin{remark}\label{rem:4}
It is an old problem dating back to \cite{bs96} to decide which graphs $G$ satisfy $\pc(G)<1$,
and there has been a series of related results since.
It was proved in \cite[Thm 1.3]{DGRSY} that $\pc(G)<1$ for all
quasi-transitive graphs $G$ with super-linear growth (see also \cite{EH}). This class includes all $G \in\sQ$
with either one or infinitely many ends (see \cite[Sect.\ 1.4]{Bab97} and Theorem \ref{p21}).
\end{remark}

\begin{theorem}\label{thm1}
Let $G \in\sQ$ be one-ended. 
\begin{letlist}
\item Let $A_0\in\ZZ$. If, for every $A>A_0$, $G$ does not have property $\Pi_A$,  
then $\pc(G_*)=\pc(G)$.
\item 
There exists $A'(G)\ge A(G)$ such that the following holds.
Let $A>A'(G)$. If $G$ has property $\what\Pi_{A}$, then $\pc(\Gd) < \pc(G)$.
\end{letlist}
\end{theorem}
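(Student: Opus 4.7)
For part (a), the plan is to show that under the hypothesis, every infinite open \nst\ path of $G_*$ issuing from $v_0$ may use diagonals only inside a bounded ball around $v_0$, so that its tail must be an infinite open path of $G$, forcing $\pc(G_*)\ge\pc(G)$. Fix $A>\max(A_0,A(G))$. Given any infinite open self-avoiding path of $G_*$ from $v_0$, oxbow-removal (Lemma \ref{lem:cred}) produces an infinite open \nst\ path $\gamma=(v_0,x_1,x_2,\dots)$ of $G_*$. If $\gamma$ uses a diagonal at a vertex $x_i$ with $d_G(v_0,x_i)>A$, then both endvertices of the diagonal lie in $\ol\si_A^*(x_i)$ by the choice $A\ge A(G)$ and \eqref{eq:ag}, and the sub-path of $\gamma$ around $x_i$ staying within $\ol\si_A^*(x_i)$ is a finite \nst\ sub-path with both endpoints on $\sias(x_i)$, passing through $x_i$ and including the diagonal; this witnesses $\Pi_A$, contradicting the hypothesis. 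Hence all diagonals of $\gamma$ lie in $\La_A(v_0)$, the tail of $\gamma$ past the last visit to $\La_A(v_0)$ is an infinite open path of $G$ issuing from some vertex of the finite set $\La_A(v_0)$, and a union bound combined with quasi-transitivity gives $p\ge\pc(G)$; together with the trivial reverse this yields $\pc(G_*)=\pc(G)$.

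For part (b), the plan is the enhancement method of Aizenman--Grimmett \cite{AG} applied to $\Gd$. Introduce the two-parameter product measure $\PP_{p,s}$ in which each $v\in V$ is open with probability $p$ and each facial site is independently open with probability $s$, and let $\theta(p,s)=\PP_{p,s}(v_0\lra\oo\text{ in }\Gd)$, $\theta_n(p,s)=\PP_{p,s}(v_0\lra\pd\La_n)$. Since closed facial sites contribute no connection, $\theta(p,0)=\theta(p;G)$, while standard site percolation on $\Gd$ corresponds to $s=p$ and has critical value $\pc(\Gd)=\inf\{p:\theta(p,p)>0\}$. The key step is to establish the differential inequality
$$
\frac{\partial\theta_n}{\partial s}(p,s)\ge c\,\frac{\partial\theta_n}{\partial p}(p,s)
$$
uniformly in $n$ on an open region of $(p,s)\in(0,1)^2$, where $c=c(G)>0$. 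Once known, integration along the ray $(p(t),s(t))=(p_0-ct,t)$ starting at $(p_0,0)$ with $p_0>\pc(G)$ gives $\theta(p_0-ct,t)\ge\theta(p_0;G)>0$; setting $t=p_0/(1+c)$ makes $p(t)=s(t)$, so $\theta(p_0/(1+c),p_0/(1+c))>0$ and $\pc(\Gd)\le p_0/(1+c)$. Letting $p_0\searrow\pc(G)$ yields $\pc(\Gd)\le\pc(G)/(1+c)<\pc(G)$.

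The core of the argument is the differential inequality, which by Russo's formula reduces to a uniform pivotality transfer: to each vertex $v\in V$ pivotal for $\{v_0\lra\pd\La_n\}$ one assigns a canonical nearby facial site $\phi(v)$ and shows that, with conditional probability at least $c$, a deterministic surgery on $\ol{\what\si}_A(v)$ makes $\phi(v)$ pivotal in place of $v$. The geometric engine is property $\what\Pi_A$ (Definition \ref{def10'}(b)): there is a finite \nst\ path of $\Gd$ through a facial site connecting two points of $\what\si_A$; by quasi-transitivity such a configuration is available, up to $\Aut(G)$-translation, at a positive density of vertices. The \nst\ cycles $\what\si_r(v)$ of Lemma \ref{prop:10}(b), taken for $r$ just below and just above $A$, insulate the surgery from the ambient configuration, and Lemma \ref{olpi} ensures that all \nst\ paths involved are plane, so that the surgical rewiring is compatible with the global connection event. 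The deterministic cost of the modification is $\min(p,1-p,s,1-s)^{O(A)}$, strictly positive on any compact sub-region of $(0,1)^2$. The main obstacle is the bookkeeping: a canonical choice of $\phi(v)$ must be fixed as a function of local geometry to avoid double-counting in Russo's sum, and the rewritten $O(A)$-site configuration must not create alternative open bypasses that would void the pivotality of $\phi(v)$. The requirement $A>A'(G)$ (available through Theorem \ref{thm:13}) provides the geometric room needed for all of this to be arranged uniformly.
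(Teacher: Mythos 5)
Your proposal is correct and follows essentially the same route as the paper: part (a) via oxbow-removal plus the observation that a diagonal far from the origin would witness $\Pi_A$, and part (b) via the Aizenman--Grimmett enhancement scheme on $\Gd$ with the two-parameter measure $\PP_{p,s}$, Russo's formula, and a local pivotality-transfer (surgery inside $\ol{\what\si}_M(z)$ using $\what\Pi_A$, the \nst\ cycles of Lemma \ref{prop:10}(b), and planarity from Lemma \ref{olpi}). The only cosmetic difference is that the paper absorbs the multiplicity in Russo's sum into the constant $g(p,s)$ (each facial site is counted at most $|\ol\La_M|$ times) rather than fixing a canonical $\phi(v)$, and it derives $\pc(\Gd)<\pc(G)$ from Proposition \ref{thm2} rather than by integrating along your explicit ray; both are standard and equivalent.
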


The constant $A'(G)$ in part (b) depends on the \emph{embedded graph} $G$, viewed as a subset of $\sH$, rather 
on the graph $G$ alone.
In advance of giving the proof of Theorem \ref{thm1}, we explain how
it implies Theorem \ref{maintheorem}.

\begin{proof}[Proof of Theorem \ref{maintheorem} (assuming Theorem 5.2)]
If $G$ does not have property $\Pi$, by Theorem \ref{thm:13} for large $A$  it does not have property $\Pi_A$,
whence by Theorem \ref{thm1}(a), $\pc(G_*)=\pc(G)$. Conversely, if $G$ has property $\Pi$,
by Theorem \ref{thm:13} again it has property $\what\Pi_{A}$ for large $A$, whence by  
Theorem \ref{thm1}(b),  $\pc(\Gd) < \pc(G)$. The final claim follows by the elementary inequality
$\pc(G_*) \le \pc(\Gd)$; see \eqref{eq:three}.
\end{proof}

\begin{proof}[Proof of Theorem \ref{thm1}(a)]
Let $A_0\in\ZZ$. Assume $G$ has property $\Pi_A$ for no $A\ge A_0$, and let $p>\pc(G_*)$. 
Let $\pi$ be an infinite open path of $G_*$ with some endpoint  $x$.  By Lemma \ref{lem:cred}(b),
there exists a subset $\pi'$ of $\pi$ that forms a \nst\ path of $G_*$ with endpoint $x$. Let $A>A_0$. 
Since $\Pi_A$ does not hold, every edge of $\pi'$ at distance $2A$ or more from $x$ is an edge of $G$, so that
there exists an infinite open path in $G$. Therefore, $p\ge \pc(G)$, whence
$\pc(G_*)=\pc(G)$. 
\end{proof}

The rest of this section is devoted to the proof of Theorem \ref{thm1}(b).
Let 
$\what\Om = \Om_V \times \Om_\Phi$ where $\Phi$
is the set of facial sites and $\Om_\Phi=\{0,1\}^\Phi$. 
For $\what\om=\om\times \om' \in \what \Om$ and $\phi\in \Phi$, we call $\phi$ \emph{open}
if $\om'_{\phi}=1$, and \emph{closed} otherwise. Let $\PPps=\PP_p \times \PP_s$
be the corresponding product measure on $\Om_V\times\Om_\Phi$, and
$$
\theta(p,s) = \lim_{n\to\oo} \theta_n(p,s)\qq\text{where}\qq 
\theta_n(p,s)=\PPps(v_0 \lra \pd\La_{n}\text{ in } \Gd),
$$
so that
\begin{equation}\label{eq:three}
\theta(p,0)=\theta(p;G), \q \theta(p,p)=\theta(p;\Gd),
\q \theta(p,1)=\theta(p;G_*),
\end{equation}
where $\theta(p;H)$ denotes the percolation probability
of the graph $H$.
Note that $\theta(p,s)$ is non-decreasing in $p$ and $s$.
The following proposition implies Theorem \ref{thm1}(b).

\begin{proposition}\label{thm2}
There exists $A'(G)<\oo$ such that the following holds.
Suppose $G \in \sQ$ is one-ended and has property $\what\Pi_{A}$
where $A>A'(G)$. 
Let $s\in(0,1)$. There exists $\eps=\eps(s)>0$ such that
$\theta(p,s)>0$ for $\pc(G)-\eps<p<\pc(G)$.
\end{proposition}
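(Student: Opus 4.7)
The plan is to adapt the enhancement method of Aizenman and Grimmett \cite{AG}: I view $\theta_n(p,s)$ as a smooth function on $[0,1]^2$ and work toward a differential inequality of the form
$$
\frac{\pd \theta_n}{\pd s}(p,s') \;\ge\; c\,\frac{\pd \theta_n}{\pd p}(p,s'),
$$
valid uniformly in $n$, in $s'\in(0,s]$, and in $p$ in a neighbourhood of $\pc(G)$, for some constant $c=c(s,A)>0$. Integrating this inequality along a straight line of slope $c$ in the $(p,s)$-plane, starting from the point $(\pc(G)+\de,0)$ and moving in the direction of decreasing $p$ and increasing $s$, will produce $\theta_n(p',s)\ge\theta_n(\pc(G)+\de,0)$ at some $p'<\pc(G)$. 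Letting $n\to\oo$ then gives $\theta(p',s)\ge\theta(\pc(G)+\de;G)>0$, as required.

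Russo's formula applied to $\PPps$ on $\what\Om$ expresses $\pd_p\theta_n$ (\resp, $\pd_s\theta_n$) as the expected number of vertices (\resp, facial sites) that are pivotal for $\{v_0\lra\pd\ol\La_n\text{ in }\Gd\}$. The heart of the argument will be a finite-energy comparison between these two pivotality sums: for each vertex $v\in\ol\La_{n-A}$ pivotal for the connection event, I plan to exhibit a local modification of the configuration inside $\ol{\what\si}_A(v)$ that produces a pivotal facial site $\phi(v)$ for the same event, at a multiplicative probability cost bounded below by a constant depending only on $p$, $s$, $A$. The inputs are quasi-transitivity, which provides only finitely many $\Aut(G)$-orbit-types of vertex, together with the hypothesis $\what\Pi_A$ applied orbit-wise: by Theorem \ref{p21}(c), automorphisms of $G$ extend to isometries of $\sH$ and thus transport the witness path supplied by $\what\Pi_A$ to a \nst\ path $\pi_v$ of $\Gd$ inside $\ol{\what\si}_A(v)$ passing through $v$ and an adjacent facial site $\phi(v)$ at its midpoint.

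I expect the construction of this gadget to be the main obstacle. On the event that $v$ is pivotal there are two vertex-disjoint open arms reaching $v$ from outside $\ol{\what\si}_A(v)$, and these must be spliced onto the two ends of $\pi_v$; this will use exactly the path-surgery techniques from Section \ref{ssec:4.3}, including insertion of a hyperbolic tube as in Lemma \ref{prop:10}(a) whenever the two splice-points on $\what\si_A(v)$ happen to be near-adjacent along the cycle. Having performed the splicing, I set the states of vertices and facial sites inside $\ol{\what\si}_A(v)$ to match $\pi_v$ (open on the path, closed off it); because this modification touches only the bounded number of sites in $\ol{\what\si}_A(v)$ and because each modified configuration has at most boundedly many preimages under the modification (again by quasi-transitivity and the bounded diameter of the gadget), the finite-energy property of $\PPps$ yields the desired factor $c$. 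Vertices within distance $A$ of $\pd\ol\La_n$ must be handled separately; their contribution to $\pd_p\theta_n$ is bounded uniformly in $n$ by a constant times the bulk contribution, which is absorbed into $c$. The value of $A'(G)$ arises from the lower bounds on $A$ demanded by the path-surgery and hyperbolic-tube insertion, mirroring the constant in Theorem \ref{thm:13}; once the differential inequality is in hand, the integration and passage to $n\to\oo$ described in the first paragraph complete the proof.
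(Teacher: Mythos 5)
Your overall strategy is the paper's: Russo's formula on the product space $\Om_V\times\Om_\Phi$, a local modification converting a pivotal vertex into a nearby pivotal facial site at bounded probabilistic cost, the resulting differential inequality $\pd_p\theta_n\le g(p,s)\,\pd_s\theta_n$ (your inequality is the same one, written the other way round), integration along a line in the $(p,s)$-plane, and separate treatment of vertices near $\pd\ol\La_n$. The path-surgery and hyperbolic-tube ingredients you invoke are exactly those the paper deploys in Stages II--V of its proof of Lemma \ref{lem:1}.

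One step as written would fail. Property $\what\Pi_A$ asserts the existence of the witness path at \emph{one} vertex $v$, hence in one orbit; it cannot be ``applied orbit-wise'', and for a pivotal vertex $z\notin\Aut(G)v$ there is no automorphism transporting the witness to a \nst\ path through $z$, so the gadget you describe as ``passing through $v$ \dots\ at its midpoint'' need not exist at a generic pivotal vertex. The paper's remedy is a two-scale construction: choose $\alpha\in\Aut(G)$ with $d_G(z,\alpha v)\le\De$ (with $\De$ as in \eqref{eq:defde}), splice the gadget inside the inner cycle $\what\si_A(\alpha v)$, and work in the annulus between $\what\si_A(\alpha v)$ and the outer cycle $\what\si_M(z)$ with $M=2(A+\De)$, using Lemma \ref{prop:10} and hyperbolic tubes to reroute the two arms of $z$ so that they terminate on the two ends of the relocated gadget rather than at $z$ itself. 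Your framework accommodates this once the gadget is moved, but the relocation and the consequent rerouting of the arms across the annulus is a necessary extra layer of surgery, and it is there (together with the Stage III case analysis on how the arms first meet $\what\si_M(z)$) that the requirement $A>A'(G)$ actually bites.
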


We do not investigate the details of how $A'(G)$ depends on $G$. An explicit lower bound on $A'(G)$
may be obtained in terms of local properties of the embedding of $G$, but it is doubtful whether this will be useful in practice.

The rest of this proof is devoted to an outline of that of Proposition \ref{thm2}.
Full details are not included, since they are very close to established arguments of \cite{AG}, \cite[Sect.\ 3.3]{GP},  and elsewhere.

Let $n$ be large, and later we shall let $n \to\oo$. 
Consider site percolation on $\Gd$  with measure $\PPps$. 
We call a vertex (\resp, facial site) $z$ \emph{pivotal} if it is pivotal for the existence of
an open path of $\Gd$ from $v_0$ to $\pd\La_n$ (which is to say that such a path exists if 
$z$ is open, and not otherwise).
Let $\piv_n$ be the set of pivotal vertices, and $\div_n$ the set of pivotal facial sites.
Proposition \ref{thm2} follows in the \lq usual way' (see \cite[Sect.\ 3.3]{GP})
from the following statement.

\begin{lemma}\label{lem:1}
Let $p,s\in(0,1)$. There exists $M\ge 1$ and $f:(0,1)^2\to(0,\oo)$ such that, for $n>4M$ and 
every $z\in \La_n$,
\begin{equation}\label{3}
\PPps(z \in \piv_n) \le f(p,s) \PPps(\div_n \cap \La_M(z) \ne \es).
\end{equation}
\end{lemma}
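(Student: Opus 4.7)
The plan is to execute a local finite-energy modification argument of the Aizenman--Grimmett type \cite{AG}; since \cite[Sect.~3.3]{GP} and the paper itself note that the remaining argument is standard, the goal here is to spell out the architecture and highlight where the site-percolation subtleties appear. First, fix $z \in \ol\La_n$ with $n > 4M$ (where $M$ is chosen below). By quasi-transitivity there exists an automorphism of $G$ carrying the distinguished vertex of Definition \ref{def10'}(b) to within bounded graph-distance of $z$; transporting the \nst\ path $\pi = (x_0,\dots,x_k)$ of $\Gd$ given by $\what\Pi_A$ under this automorphism yields a \nst\ path $\pi_z$ in $\Gd$ with endpoints on some translate $\what\si_A(v)$ of the annular cycle and containing a facial site $\phi = x_{i+1}$ as an interior vertex. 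I would pick $M = M(G,A)$ large enough that $\ol{\what\si}_A(v) \subseteq \ol\La_M(z)$ uniformly in $z$.

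Next, I would define a modification map $T$ as follows. For each $\what\om \in \{z \in \piv_n\}$, let $T(\what\om)$ agree with $\what\om$ outside $\ol\La_M(z)$, and inside $\ol\La_M(z)$ declare every vertex and facial site closed, with the sole exception of the sites lying on $\pi_z$ (including $\phi$), which we declare open. I would then verify that $T(\what\om) \in \{\phi \in \div_n\}$. For the \lq$\phi$ open' half: since $z$ is pivotal in $\what\om$, there are two vertex-disjoint open paths joining $v_0$ and $\pd\ol\La_n$ to $z$; each must cross the cycle $\what\si_A(v)$, so their portions outside $\ol{\what\si}_A(v)$ survive in $T(\what\om)$ and, composed with $\pi_z$ itself, give an open $v_0 \lra \pd\ol\La_n$ path in $\Gd$. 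For the \lq$\phi$ closed' half: every other site of $\ol\La_M(z)$ is closed, and the \nst\ property of $\pi_z$ (Lemma \ref{olpi}) forbids any adjacency in $\Gd$ that would short-circuit around $\phi$, so no open $v_0 \lra \pd\ol\La_n$ path survives.

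With the map $T$ in hand, the argument is concluded by a standard finite-energy bound: $T$ changes the states of at most $K := |\ol\La_M(z)|$ sites, and $K$ is bounded uniformly in $z$ by quasi-transitivity. Setting $\eta := \min(p, 1-p, s, 1-s)$, the resampling inequality gives
\[
\PPps(z \in \piv_n) \le \eta^{-K}\, \PPps\bigl(\div_n \cap \ol\La_M(z) \ne \es\bigr),
\]
which is \eqref{3} with $f(p,s) = \eta^{-K}$.

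The main obstacle is the verification that $T(\what\om)$ really makes $\phi$ pivotal --- specifically, the need to rule out parasitic short-circuits that are harmless in bond percolation but dangerous in site percolation (cf.\ \cite{BBR}): namely, adjacencies in $\Gd$ between non-consecutive vertices of $\pi_z$, or between $\pi_z$ and the external open structure that would bypass $\phi$. The \nst\ formulation of $\what\Pi_A$, combined with Lemma \ref{olpi}, is precisely designed to preclude the first, and the buffer $A > A'(G)$ inherited from Theorem \ref{thm:13} ensures that $\what\si_A(v)$ separates $\pi_z$ cleanly from the exterior, precluding the second.
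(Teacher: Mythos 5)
There is a genuine gap at the heart of your modification map $T$. After you declare every site of $\ol\La_M(z)$ closed except those on $\pi_z$, the surviving open structure consists of (i) the portions of the original open paths lying \emph{outside} $\ol\La_M(z)$, which terminate somewhere on $\pd\ol\La_M(z)$, and (ii) the path $\pi_z$, whose endpoints lie on $\what\si_A(v')$ deep inside $\ol\La_M(z)$. These two pieces are not connected in $T(\what\om)$: the segments of the original paths that ran from $\pd\ol\La_M(z)$ inward to $z$ have been erased, and nothing replaces them. So the claimed open $v_0\lra\pd\ol\La_n$ path in $T(\what\om)$ does not exist, and $\phi$ is not pivotal (indeed $v_0\nlra\pd\ol\La_n$ whether or not $\phi$ is open). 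Moreover, even if you repaired this by retaining the external paths up to their first entry into the inner region, their entry points on $\what\si_A(v')$ are arbitrary: they need not be near the endpoints of $\pi_z$, they may both land near the \emph{same} endpoint (in which case the two sides of $\phi$ get short-circuited and $\phi$ is again not pivotal), and the two connecting routes must be arranged to be mutually non-touching so that closing $\phi$ genuinely disconnects. Constructing these two non-touching connections --- steering the two external stubs onto two disjoint, non-touching arcs $\si_M^1,\si_M^2$ of a cycle $\what\si_M(z)$, doing the same for the two ends of the spliced path on $\what\si_A(v')$, and then joining corresponding arcs across the annulus by non-touching paths obtained from hyperbolic tubes (Lemma \ref{prop:10}) --- is precisely the content of Stages III--V of the paper's proof, including the case analysis on $D$ and the angle $\theta$. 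Your final paragraph gestures at this (\lq\lq the buffer $A>A'(G)$ ensures that $\what\si_A(v)$ separates $\pi_z$ cleanly from the exterior''), but separation is not the issue; attachment is, and it cannot be dispatched by the \nst\ property of $\pi_z$ alone.

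The surrounding architecture of your argument is the right one and matches the paper's: transport $\pi$ by an automorphism using \eqref{eq:defde}, modify in a box of radius $M=M(G,A)$ bounded uniformly in $z$ by quasi-transitivity, and conclude by the finite-energy bound with $f(p,s)=\eta^{-K}$. (Two smaller omissions: the paper first handles the case where some facial site of $\ol\La_M(z)$ is already open and becomes pivotal during a one-by-one closing, and it treats separately the boundary cases $z\in\ol\La_{2M}$ and $z\in\ol\La_n\sm\ol\La_{n-2M}$ excluded by \eqref{eq:notedge}; neither is the main difficulty.) But as written, the core step --- exhibiting a configuration, obtainable by boundedly many changes, in which a facial site is pivotal --- is not established.
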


\begin{proof}[Proof of Proposition \ref{thm2} (assuming Lemma \ref{lem:1})]
On summing \eqref{3}
over $z \in \La_n$, we obtain by Russo's formula (see \cite[Sec.\ 2.4]{GP}) 
that there exists $g(p,s)<\oo$ such that
\begin{equation}\label{39}
\frac{\pd}{\pd p}\theta_n(p,s) \le  g(p,s) \frac{\pd}{\pd s}\theta_n(p,s).
\end{equation}

The derivation of Proposition
\ref{thm2} from the above differential inequality  is explained in \cite{AG} and \cite[p.\ 60]{GP}. 
It proceeds as follows. 
Let $\eta>0$ be small, and find $\gamma\in(0,\oo)$ such that 
$g(p,s)\le 1/\gamma$ on $[\eta ,1-\eta ]^2$. Let $\psi\in [0,\frac12\pi)$
satisfy $\tan\psi=\g^{-1}$.

At the point $(p,s)\in [\eta,1-\eta ]^2$, 
the rate of change of $\theta_n(p,s)$ in the direction $(\cos\psi 
,-\sin\psi )$ satisfies
\begin{align} 
\nabla\theta_n \cdot(\cos\psi ,-\sin\psi )&=\frac{\pd\theta_n}{\pd p}\cos\psi
-\frac{\pd\theta_n}{\pd s}\sin\psi  
\label{tcn.8}\\ 
&\leq\frac{\pd\theta_n}{\pd p} (\cos\psi -\gamma\sin\psi )= 0 
\nonumber
\end{align}
by \eqref{39}, since $\tan\psi=\gamma^{-1}$. 

Suppose now that $(a,b)\in[2\eta,1-2\eta ]^2$. Let
$$
(a',b') = (a,b) - \eta (\cos\psi ,-\sin\psi ),
$$
noting that
$(a',b')\in [\eta ,1-\eta ]^2$. 
By integrating \eqref{tcn.8} along the line segment joining
$(a',b')$ to $(a,b)$, we obtain that
\begin{equation*}
\theta (a',b')=\lim_{n\to\infty}\theta_n(a',b')\geq\lim_{n\to\infty}\theta_n(a,b)
=\theta(a,b). 
\end{equation*}
Now let $s\in [2\eta,1-2\eta]$ and let $\eps\in(0,s)$ be small.
Take  $(a,b)=(\pc +\eps,s-\eps)$ where $\pc=\pc(G)$,
and define $(a',b')$ as above.
We choose $\eps$ sufficiently small 
that $(a,b), (a',b')\in [2\eta,1-2\eta ]^2$,
and that $a'<\pc$. 
The above calculation yields that
\begin{equation*}
\theta (a',b')\ge \theta(\pc+\eps,s-\eps) \ge \theta(\pc+\eps,0)>0, 
\end{equation*}
as required.
\end{proof}

Here is an outline of the proof of Lemma \ref{lem:1}
(a more formal proof follows this outline). 
Let $\what\om\in\what\Om$, $z\in V\cap\La_n$, and suppose 
\begin{equation}\label{piv}
\text{$z$ is open and pivotal in the configuration $\what \om$.}
\end{equation}

 By making changes to the configuration $\what\om$ 
within the box $\La_{4M}(z)$ for some fixed $M$, 
\begin{equation} \label{eq:text0} 
\text{\begin{tabular}{p{0.85\linewidth}}
we  construct a configuration
in which $\La_{M}(z)$ contains a pivotal facial site.
\end{tabular}
}
\end{equation}   
This implies \eqref{3} with $f$ depending on the choice of $z$.
Since $\La_{4M}(z)$ is finite and there are only finitely many types of vertex (by quasi-transitivity), 
$f$ may be chosen to be independent of $z$.
The above is achieved in five stages.

\emph{Assume for now that $\what\om\in\what\Om$ and the pivotal vertex $z$ satisfies}
\begin{equation}\label{eq:notedge}
z\in \La_{n-2M}\sm \La_{2M}.
\end{equation}
For clarity of exposition, our illustrations are drawn as if $G$ is embedded 
properly in the Euclidean rather
than the hyperbolic plane. The principal effect of this is that hyperbolic tubes are represented as Euclidean rectangles.

Let $G$ have property $\what\Pi_{A}$.
Let $\pi=(x_j)$, $v=x_i$, be as in Definition \ref{def10'}(b), 
and write $\phi=x_{i+1} = \phi(v,x_{i+2})$. Find $\alpha\in\Aut(G)$ such that $v' = \alpha v$ satisfies $d_G(z,v')\le \De$,
where $\De$ is given in \eqref{eq:defde}. Let $M=2(A+\De)$,
so that $\La_A(v')\subseteq \La_{M/2}(z)$. The outline of the proof is as follows.

\begin{romlist}
\item [I.] If there exist one or more open facial sites in $\La_M(z)$, we declare them one-by-one to be closed. If
at some point in this process, some facial site is found to be pivotal, then we have achieved \eqref{eq:text0},
by changing $\what\om$ within a bounded region.
We may therefore assume that this never occurs, or equivalently that 
\begin{equation}\label{eq:34}
\text{$\what\om$ has no open facial site in 
$\La_M(z)$.}
\end{equation}

\item [II.] Find a \nst\ open path $\nu$ in $\what\om$ from $v_0$ to $\pd\La_n$. This path passes necessarily through 
the pivotal vertex $z$.

\item [III.] By making changes within $\La_{2M}(z)$, we construct \nt\ subpaths of $\nu$ from $v_0$
(\resp, $\pd\La_n$) to $\pd\La_{M}(z)$, that can be extended inside $\La_M(z)$
in a manner to be specified at Stage V. This, and especially the following, 
stage resembles closely part of the proof in Section \ref{ssec:4.3}.

\item [IV.] We splice a copy (denoted $\pi'=\alpha\pi$) of $\pi$ inside $\La_A(v')$, and we make local changes
to obtain paths $\pi_1$, $\pi_2$ from the two endpoints
of $\alpha \phi$, \resp,  to $\pd\La_A(v')$ that can be extended 
outside $\La_A(v')$ in a manner to be specified at Stage V.

\item [V.] Between the contours $\pd\La_A(v')$ and $\pd\La_M(z)$,
we arrange the configuration in such a way that the retained parts of $\nu$ hook up with the endpoints of 
the $\pi_i$.
In the resulting configuration, the facial site $\phi':=\alpha \phi$ is pivotal.
\end{romlist}

Some work is needed to ensure that $\phi'$ can be made pivotal in the final configuration. 
Lemma \ref{prop:10}(b)
will be used to traverse the annulus between the two contours at Stage V. In making connections at junctions
of paths, we shall make use of the planarity of $\Gd$.
Rather than working with the boundaries of $\La_M(z)$ and $\La_A(v')$, we shall work instead with the 
\nst\ cycles $\wsims:=\wsims(z)$ and $\wsias:= \wsias(v')$ of $\Gd$ given in Lemma
\ref{prop:10}(b). Let
\begin{align*}
\pd^+\wsims &= \{y\in\sH\sm \ol{\what\si}_M  : \dw(y,\wsims)=1\},\\
\pd^-\wsias &= \{y\in(\wsias)^\circ: \dw(y,\wsias)=1\}.
\end{align*} 

\begin{proof}[Proof of Lemma \ref{lem:1}]
Stage I is first followed as stated above.

\begin{figure}
\centerline{\includegraphics[width=0.7\textwidth]{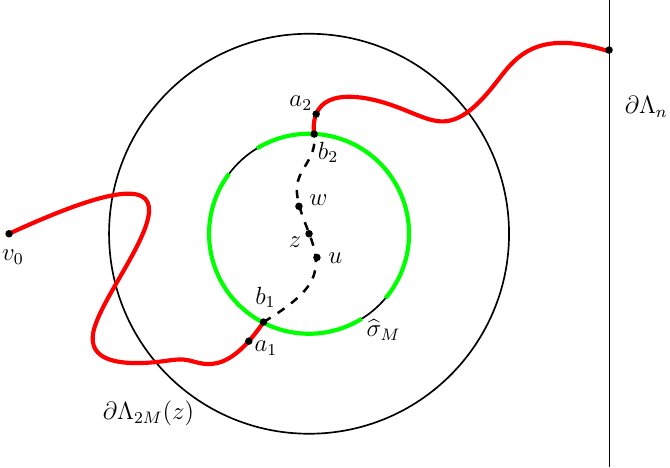}}
\caption{An illustration of the construction at Stages II/III. The \nst\ path $\nu$ contains subpaths from $v_0$  to
$\what\si_M$, and from the latter set to $\pd\La_n$. The subpaths $\si_M^i$ 
of $\what\si_M$ are indicated
in green.}\label{fig:hard1}
\end{figure}

\smallskip\noindent
{\bf Stage II.}
By \eqref{piv}, we may find an open, \nst\ path $\nu$ of $\Gd$ from $v_0$ to $\pd\La_n$, and we consider $\nu$
as thus directed. By \eqref{eq:34}, $\nu$ includes no facial site of $\La_M(z)$.
The path $\nu$ passes necessarily through $z$, and we let $u$ (\resp, $w$) be the preceding (\resp, succeeding) vertex to $z$.

For $y \in V$, and the given configuration $\what\om$ (satisfying \eqref{eq:34}), let
$$
C_y=\{x\in  V: y\lra x \text{ in $\Gd\sm\{z\}$}\},
$$
and write $C_y$ also for the corresponding induced subgraph of $\Gd$.
By \eqref{piv},
\begin{Alist}
\item $C_u$ and $C_w$ are disjoint (and also \nt),

\item the subpath of $\nu$, denoted $\nu(u-)$,  from $v_0$ to $u$ contains no facial site of $\La_M(z)$,

\item the subpath of $\nu$, denoted $\nu(w+)$,  from $w$ to $\pd\La_n$ 
contains no facial site of $\La_M(z)$,

\item the pair $\nu(z-)$, $\nu(z+)$ is \nt.
\end{Alist}

\smallskip\noindent
{\bf Stage III.}
This is closely related to the proof of Theorem \ref{thm:13} given in Section \ref{ssec:4.3}.
Note that the intersection of $\nu(u-)\cup\nu(w+)$ and $\La_{2M}(z)$ comprises a family of paths
rather than two single paths. 
See Figure \ref{fig:hard1}.

We follow $\nu(u-)$ towards $u$, and $\nu(w+)$  backwards
towards $w$, until we reach the first vertices/sites,
denoted $a_1$, $a_2$, \resp, lying in $\pd^+\wsims$. 
Let $\nu_1$ be the subpath of $\nu(u-)$ between 
$v_0$ and  $a_1$, and $\nu_2$ that of $\nu(w+)$ 
between $\pd\La_n$ and  $a_2$. 
We now change the states of certain vertices/sites $x \in \La_{2M}(z)$ by declaring 
\begin{equation}\label{nbr}
\text{every $x \in \La_{2M}(z) \sm\ol{\what\si}_M$ is declared
open if and only if $x\in\nu_1\cup \nu_2$. }
\end{equation} 

We investigate next the subsets of $\wsims$ to which the $a_i$ may be connected within $\si_M$.
We shall show that:
\begin{equation} \label{eq:text1} 
\text{\begin{tabular}{p{0.85\linewidth}}
there exist two \nt\ subpaths $\si_M^1$, $\si_M^2$ of $\wsims$, each of length at least
$\tfrac12|\wsims|-4$, such that, for $i=1,2$: (i) $a_i$ has a neighbour
$b_i \in \si_M^i$,
(ii) for $y_i\in\si_M^i$,  the path $\nu_i$ may be extended from
$b_i$ to $y_i$ along $\si_M^i$, thereby creating
(after oxbow-removal if necessary)  a \nst\ path from the other endpoint
of $\nu_i$, (iii) the composite path $\nu'_i$ thus created is \nst,
and (iv) the pair $\nu'_1$, $\nu'_2$ is \nt.
\end{tabular}
}
\end{equation}
An explanation follows. Let 
\begin{equation}\label{eq:51'}
A_i=\{b\in \wsims: \dw(a_i,b)=1\}, \q D=\max\{\dw(b_1,b_2): b_1\in A_1,\, b_2\in A_2\}.
\end{equation}

\smallskip\noindent
{\bf Suppose $D\ge2$}. Choose $b_i\in A_i$ such that $\dw(b_1,b_2)\ge 2$.  Statement \eqref{eq:text1} 
follows as illustrated in Figure \ref{fig:hard1}.

\begin{figure}
\centerline{\includegraphics[width=0.7\textwidth]{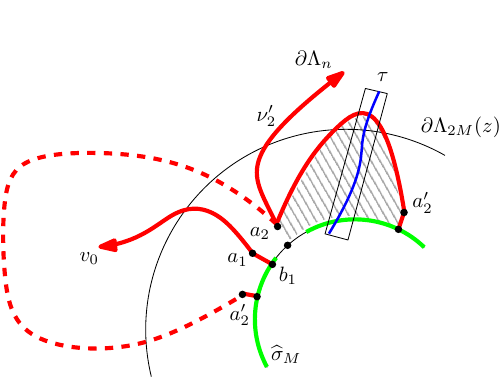}}
\caption{An illustration of the case $D=1$ in the Stage III construction.
There are two subcases, depending on whether $\theta>0$ (solid line) or $\theta<0$ (dashed line).
The green lines indicate the subpaths $\si_M^i$ in the subcase $\theta>0$.
The rectangle is added  in illustration of the hyperbolic tube used in the case $\theta\ge\frac34\pi$.}\label{fig:hard2}
\end{figure}

\smallskip\noindent
{\bf Suppose $D=1$}. We may picture $\si_M$ as a circle with centre $z$, and for
concreteness we assume that $a_2$ lies clockwise of $a_1$ around $\wsims$ (a similar argument holds if not)
See Figure \ref{fig:hard2}.  
\begin{Alist}
\item Suppose the path  $\nu_1$, when 
continued along $\nu(z-)$ beyond $a_1$, passes at the next step to some $b_1\in A_1$, 
and add $b_1$ to $\nu_1$ (to obtain a path denoted $\nu_1'$).   

Since $D=1$, the next step of $\nu(w+)$ beyond $a_2$ is not to $A_2$.
On following $\nu(w+)$ further, it moves inside $\sH\sm\ol{\what\si}_M$ until it arrives 
at some point $a_2'\in\pd^+\wsims$ having some neighbour $b_2'\in\wsims$ satisfying 
$\dw(b_1,b_2')\ge 2$; we then add to $\nu_2$ the subpath of $\nu(w+)$ between $a_2$ and $b_2'$ 
(to obtain an extended path $\nu_2'$).
Let $\theta(a_2')$ be the angle subtended by the
vector $\overrightarrow{a_2a_2'}$ at the centre $z$, 
counted positive if $\nu(w+)$ passes clockwise around $z$
of $\wsims$, and negative if 
anticlockwise.

\begin{romlist}
\item
There are two cases, depending on whether $\theta:=\theta(a_2')$
is positive or negative.  Assume first that $\theta>0$. If $\theta<\frac34\pi$, say, we declare $\si_M^1$
to be the subpath of $\wsims$ starting at $b_1$ and extending a total distance $\frac12|\wsims|-4$ around $\si_M$ anticlockwise.
We declare $\si_M^2$ similarly to start at distance $2$ clockwise of $b_1$ 
along $\what\si_M$ and to have the same
length as $\si_M^1$.  
Each $\nu_i'$ may be 
extended along $\si_M^i$ to end at any prescribed $y_i\in\si_M^i$. Therefore, claim 
\eqref{eq:text1} holds in this case. 

The situation can be more delicate if $\theta\ge \frac34\pi$, since then $a_2'$ may be near to $\si_M^1$.
By the planarity of $\nu$, the region $R$ between $\nu_2'$ and $\si_M$ contains no point of $\nu_1'$ ($R$ is the shaded region in Figure \ref{fig:hard2}).
We position a hyperbolic tube of width greater than $\Md$ 
in such a way
that it is crossed laterally by both $\nu_2'$
and the path $\si_M^2$ given above. By   Lemma \ref{prop:10}(a), this tube is crossed in the long direction by some 
path $\tau$ of $\Gd$. As illustrated in Figure \ref{fig:hard2},
the union of $\nu_2'$ and $\tau$ contains (after oxbow-removal)
a \nst\ path $\nu_2''$ from $\pd\La_n$ to $\si_M^2$
(whose unique vertex in $\si_M^2$ is its second endpoint). We now declare each vertex/site of 
$\La_{2M}(z)\sm (\what\si_M)^\circ$
to be open if and only if it lies in $\nu_1'\cup \nu_2''$. Claim \eqref{eq:text1} follows in this situation,
with the $\si_M^i$ given as above.

\begin{figure}
\centerline{\includegraphics[width=0.6\textwidth]{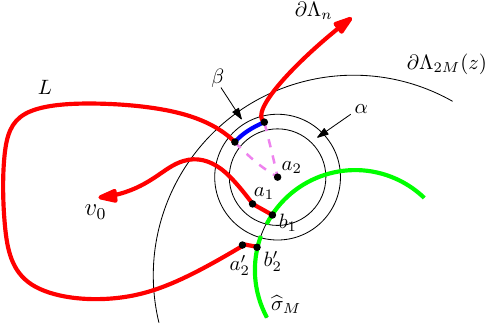}}
\caption{When $D=1$ and $\theta<0$, we adjust the path $\nu_2$ by bypassing a subpath through $a_2$.}
\label{fig:hard3}
\end{figure}

\item Assume $\theta<0$, in which case there arises a complication in the above
construction, as illustrated in Figure \ref{fig:hard3}. In this case, 
there is a sub-path $L$ of $\nu_2'$ from $a_2$ to $a_2'$, that passes anticlockwise around $v_0$,
and $\nu_1'$ contains no vertex/site outside the closed cycle comprising $L$ followed
by the subpath of $\wsims$ from $b_2'$ to $b_2$.  In order to overcome this problem, 
we alter the path $\nu_2'$ as follows. Let $\alpha$ denote the annulus $\La_M(a_2)\sm \La_{M-\zeta}(a_2)$,
with $\zeta$ as in Lemma \ref{prop:10}(b). (We may assume $M\ge 2\zeta$.)
By that lemma, $\alpha$ contains a \nst\ cycle $\beta$ of $\Gd$
that surrounds $a_2$. The union of $\nu_2'$ and $\beta$ contains (after oxbow-removal) a \nst\ path $\nu_2''$ 
of $\Gd$ from 
$\pd\La_n$ to $a_2'$ that does not contain $a_2$ (see Figure \ref{fig:hard3}). 
We declare every $x\in \nu_2''$ open and 
every $x\in\nu_2'\sm\nu_2''$ closed. The subpaths $\si_M^i$ of $\wsims$ may now be defined as above. 
\end{romlist}

\item
Suppose the hypothesis of part A does not hold, but instead $\nu_2$ passes
from $a_2$ into $\wsims$. In this case we follow A with $\nu(u-)$ and $\nu(w+)$
interchanged.  This case is slightly shorter than A since the above complication cannot occur. 

\item
Suppose neither $\nu_i$ passes from $a_i$ directly into $\wsims$. We add $b_2$ to $\nu_2$ and continue as in 
A above.

\end{Alist}

\smallskip\noindent
{\bf Suppose $D=0$}. Statement \eqref{eq:text1} holds by a similar argument to that of case (ii),

\begin{figure}
\centerline{\includegraphics[width=0.4\textwidth]{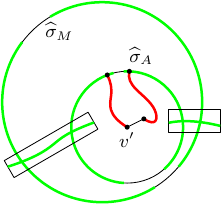}}
\caption{An illustration of the construction at Stages IV and V.}
\label{fig:hard4}
\end{figure}

\smallskip\noindent
{\bf Stage IV.}
We next pursue a similar strategy within $\La_A(v')$. The argument is essentially that in 
proof of Theorem  \ref{thm:13} given in Section \ref{ssec:4.3}, and the details of this are omitted here. 
See Figures \ref{fig:hard2'} and \ref{fig:hard4}.

\smallskip\noindent
{\bf Stage V.}
Having located the subpaths $\si_M^i$ of $\wsims$, and the subpaths $\si_A^i$ of $\wsias$,
we prove next that there exists $j\in\{1,2\}$, and \nst\ paths $\mu_1$, $\mu_2$, such that:
(i) $\mu_1$, $\mu_2$ is a \nt\ pair,   (ii) $\mu_1$ has endpoints in $\si_M^1$
and $\si_A^j$, and $\mu_2$ has endpoints in $\si_M^2$ and $\si_A^{j'}$, where $j'\in\{1,2\}$, $j'\ne j$,
and (iii) apart from their endpoints, $\mu_1$ and $\mu_2$ lie in 
$(\what\si_M)^\circ\sm \ol{\what\si}_A$. This
statement follows as in Figure \ref{fig:hard4} by positioning two hyperbolic tubes of width exceeding $\Md$,
and 
appealing to Lemma \ref{prop:10}(a). It may be necessary to remove some oxbows at the junctions of paths.

\smallskip
Hyperbolic tubes are superimposed on $\wsias$ above, and it is for this reason that $A$ is assumed 
to be sufficiently large.

Having satisfied \eqref{eq:text0} subject to \eqref{eq:notedge},
we next explain how to remove the assumption \eqref{eq:notedge}. 
Let the pivotal vertex $v$ satisfy $v\in\La_{2M}$; 
a similar argument applies if $v\in\La_n\sm\La_{n-2M}$. 
Let $\pi$ be an infinite, \nst\ open path of $\Gd$ starting at $v_0$, and declare closed every vertex of $\La_{4M}$
not lying in $\pi$. (Such a $\pi$ exists by connectivity and oxbow-removal.)
 In the resulting configuration, 
every vertex/site in the subpath of $\pi$ from $\pd\La_{2M}$ to $\pd\La_{4M}$ is pivotal. 
We pick one  such vertex 
and apply the above arguments to obtain a pivotal facial site lying in $\La_{4M}$.
\end{proof}

\section{Strict inequality using the metric method}\label{sec:emb2}

\subsection{Embeddings in the Poincar\'e disk}\label{ssec:emb}
Throughout  this section we shall work with the Poincar\'e disk model of hyperbolic geometry
(also denoted $\sH$), and we denote by $\rho$ the corresponding hyperbolic metric.

\subsection{Proof of Theorem \ref{thm:trans}}\label{ssec:proof72}

Let $\Ga$ be a doubly-infinite geodesic in the Poincar\'e disk.
Pick a fixed but arbitrary total ordering $<$ of $\Ga$. 
Then $\Ga$ may be parametrized by any function $p:\Ga\to\RR$ satisfying $p(v) = p(u)+\rho(u,v)$ for $u,v\in\Ga$, 
$u<v$, and we fix such $p$.

Any $x \notin \Ga$ has an orthogonal projection $\pi(x)$ onto $\Ga$ (for $x\in\Ga$, we set $\pi(x)=x$). 

\begin{figure}
\centerline{\includegraphics[width=0.5\textwidth]{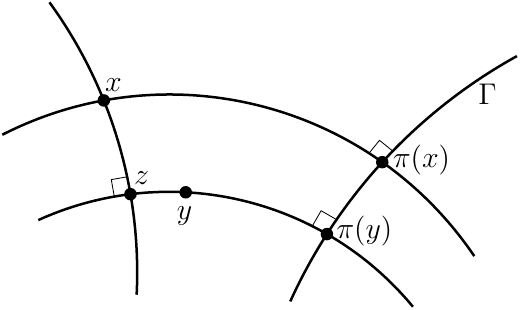}}
\caption{An illustration of the proof of Lemma \ref{lem:22}.
The four curved lines are geodesics.}\label{fig:lambert}
\end{figure}

\begin{lemma}\label{lem:22}
For $x,y\in\sH$, we have $\rho(\pi(x),\pi(y)) \le \rho(x,y)$.
\end{lemma}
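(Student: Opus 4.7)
My plan follows the birectangular quadrilateral suggested by Figure~\ref{fig:lambert}. If $\pi(x)=\pi(y)$ the inequality is trivial, so assume otherwise and consider the quadrilateral $Q$ with vertices $x$, $\pi(x)$, $\pi(y)$, $y$, whose four sides are geodesic segments of $\sH$. By the definition of orthogonal projection, the interior angles of $Q$ at $\pi(x)$ and $\pi(y)$ are right angles. Set $a:=\rho(x,\pi(x))$, $b:=\rho(y,\pi(y))$, $c:=\rho(\pi(x),\pi(y))$ and $e:=\rho(x,y)$; the claim is $c\le e$.

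Next I would derive the standard hyperbolic identity
\begin{equation*}
\cosh e \;=\; \cosh a\,\cosh b\,\cosh c \;-\; \varepsilon\,\sinh a\,\sinh b,
\end{equation*}
where $\varepsilon=+1$ when $x$ and $y$ lie on the same side of $\Ga$ and $\varepsilon=-1$ when they lie on opposite sides. The cleanest route is to apply an isometry of $\sH$ sending $\Ga$ to the imaginary axis of the Poincar\'e upper half-plane; then $\pi(re^{i\theta})=ir$ for $\theta\in(0,\pi)$, and substituting into $\cosh\rho(z_1,z_2)=1+|z_1-z_2|^2/(2\,\mathrm{Im}\,z_1\,\mathrm{Im}\,z_2)$, together with $\cos(\theta_1-\theta_2)=\cos\theta_1\cos\theta_2+\sin\theta_1\sin\theta_2$, yields the identity after algebraic simplification.

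From the identity one deduces $\cosh c\le\cosh e$, hence $c\le e$. If $\varepsilon=-1$ then $\cosh e=\cosh a\cosh b\cosh c+\sinh a\sinh b\ge\cosh c$, since $\cosh a\cosh b\ge 1$ and $\sinh a\sinh b\ge 0$. If $\varepsilon=+1$, it suffices to show $(\cosh a\cosh b-1)\cosh c\ge\sinh a\sinh b$; the identity $\cosh(a-b)=\cosh a\cosh b-\sinh a\sinh b\ge 1$ gives $\cosh a\cosh b-1\ge\sinh a\sinh b$, and multiplying by $\cosh c\ge 1$ completes the step. The main obstacle is just the sign bookkeeping in the opposite-sides configuration; one could alternatively invoke the general CAT$(0)$ fact that nearest-point projection onto a complete geodesic is $1$-Lipschitz (applicable since $\sH$ is CAT$(-1)$), which bypasses the identity entirely, but I would prefer the elementary route since Figure~\ref{fig:lambert} clearly signals a quadrilateral argument.
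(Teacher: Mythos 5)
Your proof is correct, but it takes a genuinely different route from the paper. The paper argues synthetically: it drops the perpendicular from $x$ onto the geodesic through $y$ and $\pi(y)$, obtaining a foot $z$, and then chains two classical comparisons, $\rho(x,y)\ge\rho(x,z)$ from the right triangle $x,z,y$ and $\rho(x,z)\ge\rho(\pi(x),\pi(y))$ from the Lambert quadrilateral $x,z,\pi(y),\pi(x)$, citing Iversen for the monotonicity properties of these figures; it treats only the same-side case in detail and asserts the opposite-side case is similar. You instead derive the explicit identity $\cosh e=\cosh a\cosh b\cosh c-\varepsilon\sinh a\sinh b$ for the birectangular quadrilateral via the upper half-plane model (your computation checks out: with $\Ga$ the imaginary axis, $\cosh a=1/\sin\theta_1$, etc., and the sign $\varepsilon$ is exactly the sign of $\cos\theta_1\cos\theta_2$), and then the inequality $\cosh e\ge\cosh c$ follows in both sign cases from $\cosh(a-b)\ge1$ and $\cosh c\ge1$. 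What your approach buys is a self-contained, uniform treatment of both configurations with no appeal to external facts about Lambert quadrilaterals; what the paper's buys is brevity and no coordinate computation. Your closing remark that the lemma is just the $1$-Lipschitz property of nearest-point projection onto a geodesic in a CAT$(0)$ space is also a valid (and the most general) shortcut. The only cosmetic gaps are the degenerate cases $a=0$ or $b=0$ (where the quadrilateral collapses but the identity still holds and gives the result), which are worth a sentence.
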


\begin{proof}
We  assume for simplicity that $x$ and $y$ are distinct and lie in the same connected component of $\sH\sm\Ga$;
a similar proof holds if not. The points $x,\pi(x),\pi(y),y$ form a quadrilateral with two consecutive right angles (see Figure \ref{fig:lambert}).
Let $z$ be the orthogonal projection of $x$ onto the geodesic containing $y$ and $\pi(y)$. The triple  $x,z,y$
forms a right-angled triangle, and the quadruple $x,z,\pi(y),\pi(x)$ forms a Lambert quadrilateral. 
By the geometry of such shapes (see, for
example, \cite[Sect.\ III.5]{Iver}), we have that $\rho(x, y) \ge \rho(x,z) \ge \rho(\pi(x),\pi(y))$.
\end{proof}

Let $G=(V,E)\in\sT$ be one-ended but not a triangulation.
We shall consider only the case when $G$ is non-amenable, so that it is embedded  as an Archimedean tiling in 
the Poincar\'e disk; 
the Euclidean case is similar.
 For an edge  $e$ of $G_*=(V,E_*)$, let $\rho(e)$ denote the hyperbolic 
distance between its endvertices; since every $e$ of $G_*$ (in its embedding) is a geodesic, 
$\rho(e)$ equals the hyperbolic length of $e$.  
Since the embedding is Archimedean,  every edge of $G$ has the same hyperbolic length, and
we may therefore assume for simplicity that
\begin{equation}\label{eq:100}
\rho(e)=1, \qq e\in E.
\end{equation}
Each $e\in E_*$ is a sub-arc of a unique 
doubly-infinite geodesic, denoted $\Ga_e$, of $\sH$. 

Let $r$ be the maximal number of edges in a face of $G$, and let $F$ be a face
of size $r$. Since $F$ is a regular $r$-gon, by \eqref{eq:100}, $F$ has some diagonal $d$ satisfying
\begin{equation}\label{eq:101}
\rho(d)\ge \rho(e)\ge 1, \qq e\in E_*,
\end{equation}
and we choose $d$ accordingly. By Lemma \ref{lem:22} applied to the geodesic $\Ga_d$,
\begin{equation}\label{eq:102}
\rho(\pi(e))  \le  \rho(e)\le \rho(d), \qq e\in E_*,
\end{equation}
where $\pi$ denote orthogonal projection onto $\Ga_d$, and $\rho(\ga)$ is the hyperbolic distance
between the endpoints of an arc $\ga$.

Let $<$ and $p$ be the ordering and parametrization of $\Ga_d$ given at the start of this subsection.
We extend the domain of $p$ by setting
\begin{equation*}
p(x)= p(\pi(x)),\qq x\in\sH.
\end{equation*}
We construct next a doubly-infinite path of $G_*$ containing $d$ and lying \lq close' to $\Ga_d$.
Write $d=\langle a,b\rangle$ where $a<b$. Let $\Ga_d^+$ (\resp, $\Ga_d^-$) be the
sub-geodesic obtained by proceeding along $\Ga_d$ from $b$ in the positive direction (\resp,
from $a$ in the negative direction). As we proceed along $\Ga_d^+$, we encounter edges and faces
of $G$. If $e\in E$ is such that $e\cap \Ga_d^+\ne\es$, then the intersection is either a point
or the entire edge $e$ (this holds since both $e$ and $\Ga_d$ are geodesics).  

\begin{lemma}\label{lem:t93}
Let $e=\langle x,y\rangle\in E$ be an edge whose interior $e^\circ$ intersects $\Ga_d^+$ at a singleton $g$ only, 
so that $e^\circ\cap \Ga_d^+=\{g\}$.  Then,
\begin{letlist}
\item either $p(x)=p(g)=p(y)$, or
\item some endvertex $z\in\{x,y\}$ of $e$ satisfies $p(z)>p(g)$.
\end{letlist}
\end{lemma}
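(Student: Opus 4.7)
The plan is to extend $e$ to the doubly-infinite geodesic $\Ga_e\subset\sH$ containing it, parametrise $\Ga_e$ by arc length as $t\mapsto\ga(t)$ with $\ga(0)=g$, and write $x=\ga(t_x)$, $y=\ga(t_y)$; since $g\in e^\circ$, the parameters $t_x$ and $t_y$ are nonzero and of opposite signs. I then split into two cases according to the angle $\theta\in(0,\pi)$ at which $\Ga_e$ meets $\Ga_d$ at $g$.

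If $\theta=\pi/2$, then $\Ga_e$ is the unique geodesic orthogonal to $\Ga_d$ through $g$, so the foot of perpendicular from every point of $\Ga_e$ onto $\Ga_d$ is $g$ itself. Hence $\pi(x)=\pi(y)=g$, giving $p(x)=p(g)=p(y)$, which is conclusion (a).

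If $\theta\ne\pi/2$, I would show that $t\mapsto \pi(\ga(t))$ is strictly monotone on $\Ga_e$. The map is continuous (by Lemma \ref{lem:22}, in fact $1$-Lipschitz), so strict monotonicity follows from injectivity. Suppose for a contradiction that $\pi(\ga(t_1))=\pi(\ga(t_2))=q\in\Ga_d$ for some $t_1\ne t_2$. Then $\ga(t_1)$ and $\ga(t_2)$ both lie on the unique geodesic perpendicular to $\Ga_d$ at $q$; since this perpendicular contains two distinct points of $\Ga_e$, it must equal $\Ga_e$, so $\Ga_e\perp\Ga_d$ at $q$. Because two distinct geodesics of $\sH$ meet at most once, the intersection point is $g$, forcing $q=g$ and $\theta=\pi/2$, a contradiction. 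Strict monotonicity of $t\mapsto p(\pi(\ga(t)))$, together with $p(\pi(\ga(0)))=p(g)$ and the fact that $t_x$ and $t_y$ have opposite signs, forces $p(x)$ and $p(y)$ to lie on opposite sides of $p(g)$ in $\RR$; the endpoint $z\in\{x,y\}$ whose image lies on the positive side is then the vertex required in conclusion (b).

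The main obstacle is the monotonicity step, but as sketched it reduces to two standard facts: uniqueness of the perpendicular from a point to a geodesic, and the fact that two distinct geodesics of $\sH$ meet in at most one point. Both hold in the Euclidean and hyperbolic settings, so the proof is short once the parametrisation of $\Ga_e$ and the angle dichotomy at $g$ are set up.
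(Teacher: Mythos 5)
Your proof is correct and takes essentially the same approach as the paper's one-sentence proof: the dichotomy on whether the geodesic $\Ga_e$ extending $e$ meets $\Ga_d$ perpendicularly at $g$ (giving (a)) or not (giving (b)). Your injectivity-plus-continuity argument for the strict monotonicity of $t\mapsto p(\pi(\ga(t)))$ merely supplies the details that the paper delegates to a figure.
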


\begin{proof}
The first case arises when $e$, viewed as a geodesic, is perpendicular to $\Ga_d^+$, and the second when it is not.
See Figure \ref{fig:flat}.
\end{proof}

\begin{figure}
\centerline{\includegraphics[width=0.25\textwidth]{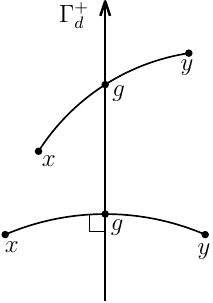}}
\caption{The two cases that arise when $\Ga_d^+$ meets an edge which is either perpendicular or not.}\label{fig:flat}
\end{figure}

In proceeding along $\Ga_d^+$, we make an ordered list $(w_i)$ of vertices as follows. 
\begin{letlist}
\item Set $w_0=b$.

\item Every time $\Ga_d$ passes into the interior of a face $F'$, it exits either at a vertex $v'$ or across
the interior of some edge $e'$. In the first case we add $v'$ to the list, and in the second, 
we add to the list an endvertex of $e'$ with maximal $p$-value. 

\item If $\Ga_d^+$ passes along an edge $e\in E$, we add both its endvertices to the list in the order
in which they are encountered.
\end{letlist}

The following lemma is proved after the end of the current proof.

\begin{lemma}\label{lem:t44}
The infinite ordered list $w=(w_0,w_1,\dots)$ is a path of $G_*$ with the property that $p(w_i)$ is strictly increasing in $i$.
\end{lemma}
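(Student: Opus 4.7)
The plan is to verify the two assertions of the lemma separately: that consecutive entries of $w$ are adjacent in $G_*$ (so that $w$ is a path of $G_*$), and that $p(w_i)$ is strictly increasing in $i$.

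For the path property, I would proceed as follows. For each $i \ge 1$, let $F_{i+1}$ denote the face of $G$ traversed by $\Gamma_d^+$ between the exit points associated with $w_i$ and $w_{i+1}$. By construction, $w_i$ is either the vertex at which $\Gamma_d^+$ exits $F_i$ (hence a vertex shared with $F_{i+1}$ through which $\Gamma_d^+$ passes on entering $F_{i+1}$) or a chosen endvertex of the edge $e_i$ separating $F_i$ from $F_{i+1}$; in either case $w_i \in V(\partial F_{i+1})$, and the analogous argument gives $w_{i+1} \in V(\partial F_{i+1})$. By the definition of the matching graph, any two vertices on the boundary of a common face are adjacent in $G_*$, whence $\langle w_i, w_{i+1}\rangle \in E_*$. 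The base case $i=0$ is similar with $w_0 = b$, $w_1 \in V(\partial F_1)$. In the special case (c), where $\Gamma_d^+$ runs along some edge $e \in E$, the two endpoints of $e$ are added consecutively and are adjacent in $G \subseteq G_*$.

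For the monotonicity of $p$, let $g_i \in \Gamma_d^+$ denote the point at which $\Gamma_d^+$ exits $F_i$ (with $g_0 = b$), so that $p(g_i) < p(g_{i+1})$ by arc-length parametrization. An application of Lemma \ref{lem:t93} combined with the rule of choosing an endvertex of maximal $p$-value yields $p(w_i) \ge p(g_i)$, with equality whenever $w_i$ lies on $\Gamma_d$ (that is, when $w_i$ is an exit vertex or the exit edge $e_i$ is perpendicular to $\Gamma_d$ at $g_i$). The crucial inequality is the one-sided bound
\[
p(w_i) < p(g_{i+1}),
\]
which combined with $p(w_{i+1}) \ge p(g_{i+1})$ immediately gives $p(w_{i+1}) > p(w_i)$. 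When $w_i \in \Gamma_d$, this reduces to $p(g_i) < p(g_{i+1})$. When $w_i = y_i$ is the non-$\Gamma_d$ endvertex of the entry edge $e_i$ of $F_{i+1}$, I would argue geometrically inside the convex face $F_{i+1}$: since $y_i$ is adjacent to the entry edge, it lies on the ``entry side'' of $F_{i+1}$ with respect to the chord $[g_i,g_{i+1}]\subset\Gamma_d$, and the maximality of $d$ (via Lemma \ref{lem:75} and inequality \eqref{eq:102}) forces $\rho(\pi(e)) \le \rho(d)$ for every edge $e\in E_*$. In particular, the diagonal $\langle y_i, v\rangle$ of $F_{i+1}$ (where $v$ is the max-$p$ vertex of $F_{i+1}$ on the exit side) has projection length bounded by $\rho(d)$, which together with convexity of $F_{i+1}$ prevents $\pi(y_i)$ from overshooting $g_{i+1}$.

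The main obstacle I anticipate is precisely the verification of $p(y_i)<p(g_{i+1})$ in the off-$\Gamma_d$ sub-case: the geometric intuition---that a vertex adjacent to the entry edge cannot project ahead of the exit point---must be turned into a rigorous argument that uses hyperbolic convexity of regular polygonal faces, the orthogonality properties of $\pi$ (Lemma \ref{lem:22}), and critically the maximality condition from Lemma \ref{lem:75}. I would split further according to whether $e_i$ is perpendicular to $\Gamma_d$ at $g_i$ (in which case $p(y_i)=p(g_i)$ and the claim is trivial) or not, and handle the latter by showing that the ``forward lean'' $p(y_i)-p(g_i)$, which is controlled by $\rho(\pi(e_i))\le\rho(d)$, is strictly smaller than the $\Gamma_d$-width $p(g_{i+1})-p(g_i)$ of $F_{i+1}$; this last step is where the choice of $d$ as a longest edge of $E_*$ is genuinely used.
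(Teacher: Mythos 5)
Your overall structure matches the paper's: the path property is read off from the fact that consecutive entries lie on the boundary of a common face, and monotonicity is attacked by comparing $p(w_i)$ with the successive points $g_i$ at which $\Ga_d^+$ exits the faces it traverses, with Lemma \ref{lem:t93} supplying $p(w_{i+1})\ge p(g_{i+1})$. You have also correctly located the one delicate point, namely ruling out an ``overshoot'' $p(w_i)\ge p(g_{i+1})$ when $w_i$ is the forward-leaning endvertex of the entry edge.

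The problem is that the route you propose for closing that step cannot work, and you do not in fact close it. The maximality of $d$ (Lemma \ref{lem:75}, inequality \eqref{eq:102}) gives only $\rho(\pi(e))\le\rho(d)$ for every $e\in E_*$, i.e.\ a bound on the forward lean by the \emph{fixed constant} $\rho(d)$. What you need is a bound by $p(g_{i+1})-p(g_i)$, the length of the chord that $\Ga_d$ cuts through $F_{i+1}$, and this has no positive lower bound: when $\Ga_d^+$ clips a corner of $F_{i+1}$ the chord is arbitrarily short, whereas the lean of the corner vertex need not be. Convexity does not rescue the comparison either, since the orthogonal projection of a convex face onto $\Ga_d$ genuinely can extend beyond the exit point (a triangle with apex off the geodesic projects past the far base vertex as soon as the angle there is obtuse). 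The corner-clipping configuration is precisely the one the paper's proof isolates and treats separately: there $w_i$ is itself an endvertex of the exit edge $e'$, and $w_{i+1}$ is then taken to be the \emph{other} endvertex of $e'$, with \eqref{eq:105} obtained from Lemma \ref{lem:t93} and the case analysis of the construction --- no metric estimate involving $\rho(d)$ appears in the proof of Lemma \ref{lem:t44} at all. The maximality of $d$ is used only \emph{after} the lemma, where \eqref{eq:102} combined with \eqref{eq:103}--\eqref{eq:104} shows that the concatenation of $\nu^-$, $d$ and $\nu^+$ is \nst\ (any chord $\langle\nu_s,\nu_t\rangle\in E_*$ joining the two halves would project to a segment longer than $\rho(d)$). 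You have transplanted that tool into the lemma itself, where it does not deliver the required strict inequality; as written, the key step of your monotonicity argument remains open.
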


We apply oxbow-removal, Lemma \ref{lem:cred}(b),  to $w$ to obtain an infinite, \nst\ path $\nu^+=(\nu_0,\nu_1,\dots)$
of $G_*$ satisfying
\begin{equation}\label{eq:103}
\nu_0=b,\qq p(\nu_0)<p(\nu_1) < \cdots. 
\end{equation} 
By the same argument applied to $\Ga_d^-$, there exists an infinite, \nst\ path
$\nu^-= (\nu_{-1}, \nu_{-2},\dots)$ of $G_*$ satisfying
\begin{equation}\label{eq:104}
\nu_{-1}=a,\qq p(\nu_{-1})>p(\nu_{-2}) > \cdots. 
\end{equation} 
The composite path $\nu$ obtained by following $\nu^-$ towards $a$, then $d$, then $\nu_+$, 
fails to be \nst\ in $G_*$ if and only if
there exists $s<0$ and $t\ge 0$ with $(s,t)\ne (-1,0)$ such that $e'':=\langle \nu_s,\nu_t\rangle \in E_*$.
If the last were to occur, by \eqref{eq:103}--\eqref{eq:104},
\begin{equation*}
\rho(\pi(e'')) = p(\nu_t)-p(\nu_s) >p(b)-p(a)=\rho(d),
\end{equation*}
in contradiction of \eqref{eq:102}. Thus $\nu$ is the required \nst\ path.
The above may be regarded as a more refined version of part of Proposition \ref{prop:10}.

\begin{proof}[Proof of Lemma \ref{lem:t44}]
That $w$ is a path of $G_*$ follows from its construction, and we turn to the second claim.
Let $m\ge 0$, and consider $w_0,w_1,\dots,w_m$ as having been identified. We claim that
\begin{equation}\label{eq:105}
p(w_m)<p(w_{m+1}).
\end{equation}

\begin{letlist}
\item Suppose $w_m\in\Ga_d^+$. 
\begin{romlist} 
\item If $\Ga_d^+$   
 includes next an entire edge of the form $\langle w_m, g\rangle\in E$, then $w_{m+1}=g$ and \eqref{eq:105} holds.

\item Suppose $\Ga_d^+$ enters next the interior of some face $F'$. If it exits $F'$ at a vertex, then this vertex
is $w_{m+1}$ and \eqref{eq:105} holds.
Suppose it exits by crossing the interior of an edge $e'$. If $w_m$ is an endvertex of $e'$, then $w_{m+1}$ is its
other endvertex  and \eqref{eq:105} holds; if not, then $w_{m+1}$ is an endvertex 
of $e'$ with maximal $p$-value 
(recall Lemma \ref{lem:t93}).
\end{romlist}
\item
Suppose $w_m$ is the endvertex of an edge $e$ that is crossed (but not traversed in its entirety) by $\Ga_d^+$,
and  let $F'$ be the face thus entered. The next vertex $w_{m+1}$ is given as in (a)(ii) above, and \eqref{eq:105} holds.
\end{letlist}
The proof is complete.
\end{proof}

Finally in this section, we prove Lemma \ref{lem:75}.

\begin{proof}[Proof of Lemma \ref{lem:75}]
Let $e=\langle u,v\rangle\in E_*$ satisfy $e\in\argmax\{\rho(f): f\in E_*\}$, 
and let $\Ga$ be the doubly infinite geodesic through $u$ and $v$.
Then, for $f=\langle x,y\rangle\in E_*$,
$$
\rho(e) \ge \rho(f) \ge \rho(x,y) \ge \rho(\pi(x),\pi(y)),
$$
where $\pi$ denotes projection onto $\Ga$. The last inequality holds by Lemma \ref{lem:22}.
Therefore, $e$ is maximal.
\end{proof}

\subsection{The case of quasi-transitive graphs}\label{ssec:qtemb}

Certain complexities arise in applying the  techniques of Section \ref{ssec:proof72} to quasi-transitive graphs.
In contrast to transitive graphs, the faces are not 
generally regular polygons, and the longest edge need not be a diagonal.

Let $G\in\sQ$ be one-ended and not a triangulation.  
As before, we restrict ourselves to the case when $G$ is non-amenable, and
we embed $G$ canonically in the Poincar\'e disk $\sH$. The edges of $G$ are hyperbolic geodesics, 
but its diagonals need not be so. The  hyperbolic length of
an edge $e\in E_*\sm E$ does not generally equal the hyperbolic distance $\rho(e)$ between its endvertices.

The proof is an adaptation of that of Section \ref{ssec:proof72}, and full details are omitted. 
In identifying a path corresponding to the path $w$ of Lemma \ref{lem:t44}, we use the fact that edges of $E$ are 
geodesics, and concentrate on the \emph{final} departures of $\Ga_d^+$ from the faces whose interiors it enters.

\begin{remark}\label{rem:nonnece2}
The condition of Theorem \ref{t72qq} may be weakened as follows. In the above proof of Theorem \ref{thm:trans}
is constructed a \dinst\ of $G_*$ (see the discussion following Lemma \ref{lem:t44}). It suffices that, 
in the sense of that discussion, there exist
a diagonal $d$ and 
$s<0$, $t\ge 1$ such that (i) the path $(\nu_s, \nu_{s+1},\dots,\nu_t)$ is \nst\ in $G_*$, and
(ii)  for all $e\in E$ we have $p(\nu_t)-p(\nu_s) > p(\pi(e))$. Cf.\ Theorem \ref{thm:13}.
\end{remark}

\emph{Note added before publication}: the quasi-transitive case is treated in \cite{G24}.

\section*{Acknowledgements} 
ZL's research was supported by National Science Foundation grant 1608896 and Simons Collaboration Grant 638143.

\providecommand{\bysame}{\leavevmode\hbox to3em{\hrulefill}\thinspace}
\providecommand{\MR}{\relax\ifhmode\unskip\space\fi MR }
\providecommand{\MRhref}[2]{%
  \href{http://www.ams.org/mathscinet-getitem?mr=#1}{#2}
}
\providecommand{\href}[2]{#2}

\end{document}